\newtheorem{teorema}{Theorem}
\newtheorem{remark}[teorema]{Remark}
\renewcommand{\arraystretch}{2}
\title{Design and analysis of ADER-type schemes for model advection-diffusion-reaction equations}
\begin{document}

\begin{center}
	\textbf{ \Large{Design and analysis of ADER-type schemes for model advection-diffusion-reaction equations} }
	
	\vspace{0.5cm}
	{S. Busto\footnote{saray.busto@usc.es}, E. F. Toro\footnote{eleuterio.toro@unitn.it}, M. E. V\'azquez-Cend\'on\footnote{elena.vazquez.cendon@usc.es}}
	
	\vspace{0.2cm}
	{\small
		\textit{$^{(1,3)}$ Departamento de Matem\'atica Aplicada, Universidade de Santiago de Compostela. Facultad de Matem\'aticas, ES-15782 Santiago de Compostela, Spain}
		
		\textit{$^{(2)}$ Department of Civil, Environmental and Mechanical Engineering, University of Trento, Via Mesiano 77, 38123 Trento, Italy}
	}
\end{center}

\hrule
\vspace{0.4cm}

\begin{center}
	\textbf{Abstract}
\end{center} 

\vspace{0.1cm}
We construct, analyse and assess various schemes of second order of accuracy in space and time for model advection-diffusion-reaction differential equations.  The constructed schemes are meant to be of practical use in solving 
industrial problems and are derived following two related approaches, namely ADER and MUSCL-Hancock. Detailed analysis of linear stability and local truncation error are carried out. In addition, the schemes are implemented and assessed for various test problems. Empirical convergence rate studies confirm the theoretically expected accuracy in both space and time.

\vspace{0.2cm}
\noindent \textit{Keywords:} 
Advection-diffusion-reaction equations; finite volume method; ADER approach; MUSCL-Hancock method; local truncation error; linear stability; empirical convergence rates.

\vspace{0.4cm}

\hrule

\section{Introduction}
Advection-diffusion-reaction equations are present in a wide range of physical/biological problems. The Navier-Stokes equations are a prominent example, which constitute a major focus for the development of numerical methods of practical use to the scientific community. A classical and successful approach for solving the aforementioned and related equations are finite volume methods (e.g. see  \cite{Toro}, \cite{BFSV14}, \cite{GR96}, \cite{LV02} and references herein).

A motivation of this paper concerns the simulation of low Mach number flows in a turbulent regime. The phenomenon can be represented by the Navier-Stokes equations coupled to a RANS $k-\varepsilon$ turbulent model, see \cite{BBCFSV15}. 
The approach introduces turbulent viscosity, which is typically computed by solving an additional pair of advection-diffusion-reaction equations, that is equations for the turbulent kinetic energy and dissipation rate. One issue here is the time dependency of the viscous term. This requires the use of methods that are at least second-order accurate in space and time for all terms involved. In practice, it is often the case that the numerical methods used are of low order of accuracy. Typically, methods may be of second order in space but only first order in time, or may be second order in both space and time but only for some of the terms in the equations.
For diffusion equations a popular choice is the second-order Crank-Nicolson method, {\color{black} see \cite{SO88}}. The accuracy for reaction terms, coupled to the remaining terms of the equations, is usually sacrificed, resulting in overall low order of accuracy.

For advection equations, several approaches for constructing high-order methods have been put forward. A classical example is the Lax-Wendroff scheme \cite{LW60}, \cite{Lax57}. This scheme is linear in the sense of Godunov \cite{God59} and thus oscillatory, according to Godunov's theorem \cite{God59}. We note that the oscillatory nature remains so even when (physical) viscous terms are added. A major step forward in this direction was the work of Kolgan \cite{Kol11}, who introduced, for the first time, a numerical scheme that circumvents Godunov's theorem, via the construction of a non-linear scheme using non-linear reconstructions (limited slopes), see \cite{Ber06} and \cite{CV12} for further details. Since then, many more works have appeared in the literature, reporting schemes, such as Total Variation Diminishing Methods (TVD) and {\color{black}Flux Limiter Methods} (see, for instance, \cite{Leer84}, \cite{VL97} and \cite{Swe84}). Comprehensive reviews are found in  \cite{Toro} and \cite{LV02}, for example.

More advanced non-linear methods for advection dominated problems include the semi-discrete ENO, {\color{black} see \cite{HEOC87}}, and WENO, {\color{black} see \cite{SO88} and \cite{LSC94},} approaches. See also the method of Harten and collaborators  \cite{HEOC87}, which is a fully discrete high-order scheme. In \cite{CT08}, this scheme was called the HEOC scheme, and was re-interpreted in terms of the solution of a generalised Riemann problem, solved in a particular way. If fact, it is easily shown that the HEOC scheme is a generalisation of the MUSCL-Hancock method, {\color{black} see \cite{Leer84}}.

The ADER approach, first put forward in \cite{TMN01}, is {\color{black}  also a} fully discrete approach that relies on non-linear reconstructions and the solution of the generalised Riemann problem, to any order of accuracy. The resulting schemes are arbitrarily accurate in both space and time in the sense that they have no theoretical accuracy barrier. An introduction to ADER schemes is found in Chapters 19 and 20 of \cite{Toro}.  Further developments and applications are found, for example, in   \cite{TT05}, \cite{TDTK06}, \cite{TT06}, \cite{TT06ecc}, \cite{Tit05}, \cite{TT07},
\cite{Col90}, \cite{Tak06}, \cite{TT05tvd}, \cite{CSDT09}, \cite{Zah08}, \cite{BD14}, \cite{IG14}, \cite{TT04}, 
\cite{MT14},  \cite{MG11}, \cite{Her02}, \cite{CM14}, \cite{CMNP13}, \cite{OA02}, \cite{DM05},  \cite{Gas07}, 
\cite{Dum10},  \cite{HD11}.

The aim of this paper is to develop  finite volume schemes of second-order in time and space to solve the advection-diffusion-reaction equation, admitting space and time dependent diffusion {\color{black} coefficients}. We follow the ADER and MUSCL-Hancock methodologies and compare both approaches. Detailed analysis, such as linear stability and accuracy in the sense of local truncation error is lacking for these methodologies applied to advection-diffusion-reaction equations. The main objective of this paper is precisely to carry out detailed stability and accuracy analysis of these methods. {\color{black}
Moreover, to determine the stability region and a new graphical methodology is introduced.}

The outline of this paper is as follows. In Section \ref{sec:adreeq} the advection-diffusion-reaction equation is introduced.  Section \ref{sec:ader} is devoted to the development of a numerical scheme for the advection-diffusion-reaction equation. The  ADER approach is adopted to approximate linear advection-reaction equations and is modified  to account for the diffusion term.
The methodology developed for the ADER scheme to treat source-term like terms is applied to the MUSCL-Hancock method in Section \ref{sec:muscl_hancock}.  In Section \ref{sec:stability}, we conduct a linear stability analysis of the schemes.
Section \ref{sec:numerical_results} is devoted to the study of empirical convergence rates of the scheme.  
Conclusions are drawn in Section \ref{sec:conclusions} and \ref{sec:appendix} is devoted to the analysis {\color{black} of} local truncation errors of the proposed schemes.

\section{The advection-diffusion-reaction equation}\label{sec:adreeq}
The advection-diffusion-reaction equation reads 
\begin{equation}
	\partial_t q (x,t) + \lambda\partial_x q(x,t) = {\color{black}\partial_x \left( \alpha \partial_x q\right)(x,t)} +\beta q(x,t)\label{eq:adre}
\end{equation}
where $q(x,t)$ is the conservative variable;
$x,t$ are the spatial and temporal independent variables; $\lambda$ is the characteristic speed;
$\alpha(x,t)$ is the diffusion coefficient, a prescribed function;
and $\beta$ is the coefficient of the reaction term (source term). 

In order to solve equation \eqref{eq:adre} we work in the finite volume framework.  To start with, we consider the control volume $V=[x_{i-\frac{1}{2}},x_{i+\frac{1}{2}}]\times[t^{n},t^{n+1}]$ in the $x-t$ plane, of dimensions,
$\Delta x= x_{i+\frac{1}{2}}-x_{i-\frac{1}{2}}$, $\Delta t= t^{n+1}-t^{n}$. Then, exact integration of equation \eqref{eq:adre} in the control volume $V$ gives  
\begin{gather*}
	\frac{1}{\Delta x}\int_{x_{i-\frac{1}{2}}}^{x_{i+\frac{1}{2}}}\!\!\left(  q(x,t^{n+1}) -q(x,t^{n})  \right) dx + \frac{1}{\Delta x}\int_{t^{n}}^{t^{n+1}}\!\! \left(  f(q(x_{i+\frac{1}{2}},t)) -   f(q(x_{i-\frac{1}{2}},t))\right)  dt\\ 
	= 
	\frac{1}{\Delta x}\int_{x_{i-\frac{1}{2}}}^{x_{i+\frac{1}{2}}}\left(  \int_{t^{n}}^{t^{n+1}} \partial_x \left(  \alpha\partial_x q\right){\color{black}(x,t)}\;   dx\right) dt  + \frac{1}{\Delta x}\int_{x_{i-\frac{1}{2}}}^{x_{i+\frac{1}{2}}}\left(  \int_{t^{n}}^{t^{n+1}} \beta q{\color{black}(x,t)} \;  dx\right) dt \;.
\end{gather*}	
Introducing the notation 
\begin{equation*}
	q_{i}^{n+1} = \frac{1}{\Delta x} \int_{x_{i-\frac{1}{2}}}^{x_{i+\frac{1}{2}}}  q(x,t^{n+1})  \;dx, \quad
	q_{i}^{n} = \frac{1}{\Delta x} \int_{x_{i-\frac{1}{2}}}^{x_{i+\frac{1}{2}}} q(x,t^{n}) \;dx,
\end{equation*}
\begin{equation*}
	f_{i+\frac{1}{2}}^{n} = \frac{1}{\Delta t} \int_{t^{n}}^{t^{n+1}}  f(q(x_{i+\frac{1}{2}},t)) \; dt,
	\quad
	f_{i-\frac{1}{2}}^{n} = \frac{1}{\Delta t} \int_{t^{n}}^{t^{n+1}}  f(q(x_{i-\frac{1}{2}},t)) \; dt,
\end{equation*}
\begin{equation*}
	g_{i}^{n} = \frac{1}{\Delta t \Delta x} \int_{x_{i-\frac{1}{2}}}^{x_{i+\frac{1}{2}}}\left(  \int_{t^{n}}^{t^{n+1}} \partial_x \left(  \alpha\partial_x q\right){\color{black}(x,t)}  \;  dx\right) dt,
\end{equation*}
\begin{equation*}
	s_{i}^{n} = \frac{1}{\Delta t \Delta x} \int_{x_{i-\frac{1}{2}}}^{x_{i+\frac{1}{2}}}\left(  \int_{t^{n}}^{t^{n+1}} \beta q{\color{black}(x,t)} \;  dx\right) dt,
\end{equation*}
we arrive at the exact relation
\begin{equation}
	q_{i}^{n+1}=q_{i}^{n}-\frac{\Delta t}{\Delta x} \left(f_{i+\frac{1}{2}}^{n}- f_{i-\frac{1}{2}}^{n} \right) + \Delta t g_i^{n}+ \Delta t s_i^{n}.
	\label{eq:fv}
\end{equation}
Thus, we can construct a numerical method to find the solution of \eqref{eq:adre} at time $t^{n+1}$ by interpreting \eqref{eq:fv} in an approximate manner, that is by approximating the integrals in \eqref{eq:fv}.
We analyse two different approaches for computing them, the ADER and the MUSCL-Hancock methodologies.
For the sake of simplicity, we will use the same notation for the approximate values than for the exact values of the integrals in \eqref{eq:fv}. It is appropriate to remark that the integral $g_{i}^{n}$ could be further integrated to yield corresponding expressions for viscous numerical fluxes.

\section{The ADER approach}\label{sec:ader}
The Arbitrary high order DErivative Riemann problem (ADER) approach was first put forward in \cite{TMN01} for the linear advection equation in one and three space dimensions. In this section, we introduce a modification of the ADER approach to solve the advection-diffusion-reaction equation. The proposed method includes the following steps:
\begin{description}
	\item[Step 1.] Polynomial reconstruction.	
	Following \cite{TMN01}, we consider a reconstruction of the data in terms of first-degree polynomials of the form
	\begin{equation*}
		p_i(x)=q_i^n+\Delta_i (x-x_i),
	\end{equation*}
	where $\Delta_i$ denotes the spatial derivative of $q(x,t)$ at time $t^{n}$ in volume $i$ (or an approximation) 
	for $i=1,2,\ldots, M$, where $M$ is the total number of finite volumes.
	In the present paper, we only consider centred slopes, that is 
	\begin{equation}\Delta_i=\frac{q_{i+1}^{n}-q_{i-1}^{n}}{2\Delta x}, \label{eq:centred_slopes}\end{equation}
	which, as proved in \ref{sec:appendix}, will provide a scheme of second order accuracy in space and time. We note however that the resulting schemes will be linear and hence oscillatory in the presence of large spatial gradients.
	
	\item[Step 2.] \label{grp} Solution of the generalized Riemann problem (GRP).	
	To construct the numerical flux the following generalizations of the Classical Riemann Problem are made. On the one hand, the initial condition is assumed to be a piecewise first-degree polynomial. On the other hand, the partial differential equation accounts for the diffusion and reaction terms. That leads to the problem
	\begin{equation}
		\left\lbrace \begin{array}{l}
			\partial_t q\left(x,t\right)+\lambda \partial_x q\left(x,t\right) ={\color{black} \partial_x\left( \alpha\partial_x q\right)\left(x,t\right)}+\beta q\left(x,t\right),\\
			q(x,0)=\left\lbrace \begin{array}{ll}
				p_i(x), & x<0,\\
				p_{i+1}(x), & x>0.
			\end{array} \right.
		\end{array}\right.\label{eq:GRP}
	\end{equation}
	
	\item[Step 3.] \label{adre_difusion} \label{ader_source} Source term and diffusion term.	
	These terms are computed by approximating the integrals by the mid-point rule in both space and time.	
\end{description} 
In the following sections we will develop the last {\color{black} two} steps. 

\subsection{Step 2.  Solution of the generalized Riemann problem}
For ease of presentation, two different cases for the diffusion term will be considered: zero diffusion term and a space and time dependent diffusion coefficient.

\subsubsection{Numerical flux without diffusion}

Expressing the solution of the GRP at the interface as a Taylor series expansion in time we have
\begin{equation} {\color{black} \overline{q}_{i+\frac{1}{2}}}=q(0,0_+) + \tau \partial_t q(0,0_+)  \;. \label{eq:taylor_lare}\end{equation}
We find the solution of \eqref{eq:GRP} as given by two terms. One being the solution of a classical Riemann problem, $q(0,0_+)$, and the other as given by the high order term, $\tau \partial_t q(0,0_+)$.

The solution of the classical Riemann problem
\begin{equation*}
	\left\lbrace \begin{array}{l}
		\partial_t q\left(x,t\right)+\lambda \partial_x q\left(x,t\right) =0,\\
		q(x,0)=\left\lbrace \begin{array}{ll}
			q_{i}^{n}+\frac{1}{2}\Delta x \Delta_i, & x<0,\\
			q_{i+1}^{n}-\frac{1}{2}\Delta x \Delta_{i+1}, & x>0,
		\end{array} \right.
	\end{array}\right.
\end{equation*} 
is
\begin{equation*}
	{\color{black}q\left(x,t\right)}= d_{i+\frac{1}{2}}^{(0)}(x/t)=\left\lbrace \begin{array}{ll}
		q_{i}^{n}+\dfrac{1}{2}\Delta x \Delta_i, & \dfrac{x}{t}<\lambda, \\
		q_{i+1}^{n}-\dfrac{1}{2}\Delta x \Delta_{i+1}, & \dfrac{x}{t}>\lambda,
	\end{array}  \right.
\end{equation*}
hence, the solution for $\frac{x}{t}=0$ is  given by
\begin{equation}
	q(0,0_+)=d_{i+\frac{1}{2}}^{(0)}(0)=\left\lbrace \begin{array}{ll}
		q_{i}^{n}+\dfrac{1}{2}\Delta x \Delta_i, & \lambda>0,\\
		q_{i+1}^{n}-\dfrac{1}{2}\Delta x \Delta_{i+1}, & \lambda<0.
	\end{array}  \right. \label{eq:ader_b1}
\end{equation}

On the other hand, regarding Cauchy-Kovalevskaya procedure and assuming a zero diffusion term, it is verified
\[\partial_t q\left(x,t\right)=-\lambda \partial_x q\left(x,t\right) +\beta q\left(x,t\right), \]
thus, we can express \eqref{eq:taylor_lare} in terms of spatial derivatives
\begin{equation}{\color{black}\overline{q}_{i+\frac{1}{2}}} = q(0,0_+) + \tau\left[-\lambda \partial_x q(0,0_+)+ \beta q(0,0_+)\right].\end{equation}
It is easy to show that the following evolution equation for the spatial derivative of the conservative variable is valid
\begin{equation}\partial_t (\partial_x q(x,t)) + \lambda \partial_x^{(2)}  q(x,t) = \beta \partial_x q(x,t) \;.
\end{equation}
Neglecting the source term, a new classical Riemann problem can be set for the spatial gradient
\begin{equation}
	\left\lbrace \begin{array}{l}
		\partial_t (\partial_x q(x,t)) + \lambda \partial_x^{(2)} q(x,t) = 0,\\
		\partial_x q(x,0) = \left\lbrace \begin{array}{ll}
			\Delta_i, & x<{\color{black} 0},\\
			\Delta_{i+1}, & x>{\color{black} 0}.
		\end{array} \right.
	\end{array} \right.
\end{equation}
Its solution is
\begin{equation}
	{\color{black}\partial_x q\left(x,t\right)}= d_{i+\frac{1}{2}}^{(1)}\left( \frac{x}{t}\right) = \left\lbrace \begin{array}{ll}
		\Delta_i, & \dfrac{x}{t}<\lambda,\\
		\Delta_{i+1}, & \dfrac{x}{t}>\lambda,
	\end{array} \right.
\end{equation}
and
\begin{equation}
	\partial_x q(0,0_{+})=d_{i+\frac{1}{2}}^{(1)}\left( 0\right) = \left\lbrace \begin{array}{ll}
		\Delta_i, & \lambda>0,\\
		\Delta_{i+1}, & \lambda<0,
	\end{array} \right. \label{eq:ader_b2}
\end{equation}
is the solution at $\frac{x}{t}=0$.

Then, from \eqref{eq:ader_b1}  and \eqref{eq:ader_b2}, the sought complete solution reads
\begin{align}
	{\color{black} \overline{q}_{i+\frac{1}{2}}} &= q(0,0_+) + \tau \partial_t q(0,0_+)\\ &=
	\left\lbrace \begin{array}{ll}
		q_{i}^{n} +\frac{1}{2} \Delta x\Delta_i + \tau \left[-\lambda \Delta_{i} +\beta \left( q_{i}^{n} +\frac{1}{2} \Delta x\Delta_i \right) \right] & \lambda>0,\notag \\
		q_{i+1}^{n} -\frac{1}{2} \Delta x\Delta_{i+1} + \tau \left[-\lambda \Delta_{i+1} +\beta \left( q_{i+1}^{n} -\frac{1}{2} \Delta x\Delta_{i+1} \right) \right] & \lambda<0.
	\end{array}  \right.
\end{align}
Performing exact integration, or equivalently applying the mid-point rule with $\tau=\frac{1}{2}\Delta t$, we obtain the numerical flux,
\begin{align}
	f_{i+\frac{1}{2}}= 
	\left\lbrace \begin{array}{ll}
		\lambda \left\lbrace q_{i}^{n} +\frac{1}{2} \Delta x\Delta_i + {\color{black} \dfrac{\Delta t}{2}} \left[-\lambda \Delta_{i} +\beta \left( q_{i}^{n} +\frac{1}{2} \Delta x\Delta_i \right) \right] \right\rbrace & \lambda>0,\notag\\
		\lambda \left\lbrace q_{i+1}^{n} -\frac{1}{2} \Delta x\Delta_{i+1} + {\color{black} \dfrac{\Delta t}{2}} \left[-\lambda \Delta_{i+1} +\beta \left( q_{i+1}^{n} -\frac{1}{2} \Delta x\Delta_{i+1} \right) \right] \right\rbrace & \lambda<0.
	\end{array}  \right.
\end{align}

\subsubsection{Numerical flux with diffusion}
In order to treat the diffusion term we adopt the following strategy. The diffusion term is regarded as a source term but is introduced in the Cauchy-Kovalevskaya procedure and is evaluated in an upwind fashion. The {\color{black}upwinding} of the diffusion term can be justified because in the case of constant diffusion coefficient $\alpha$, the second derivative of the solution $q(x,y)$ of the linear advection equation, in fact any order derivative, obeys identically the same linear advection equation. Hence we can pose and solve a classical Riemann problem for these spatial derivatives leading effectively to {\it upwinding the diffusion term}. 

The Cauchy-Kovalevskaya procedure used in the Taylor series expansion in time for the solution of the GRP,
\[{\color{black}\overline{q}_{i+\frac{1}{2}}} = q(0,0_+)+\tau \partial_t q(0,0_+),\]
gives
\[{\color{black} \overline{q}_{i+\frac{1}{2}}} = q(0,0_+) + \tau\left[-\lambda \partial_x q(0,0_+) + {\color{black}\partial_x \left( \alpha\partial_x q\right)(0,0_+) } + \beta q(0,0_+)\right].\]

As anticipated previously, the term ${\color{black}\partial_x \left( \alpha\partial_x q_i\right)(0,0_+)}$ is approximated in a central difference fashion as follows
\begin{equation}
	{\color{black}\partial_x \left( \alpha\partial_x q\right)(0,0_+)} =\frac{1}{\Delta x^2}\left[ \alpha_{i+\frac{1}{2}}\left(q_{i+1}^{n}-q_{i}^{n}\right)-\alpha_{i-\frac{1}{2}}\left(q_{i}^{n}-q_{i-1}^{n}\right)  \right].\label{eq:centred_2slopes}
\end{equation}
The choice made for approximating the diffusion term in this manner is motivated by the fact that in {\color{black} our existing} 3D Navier-Stokes code in development this term will be computed by solving a pair of advection-diffusion-reaction equations.

For notational convenience we set $\left(\Delta \alpha \Delta\right)_i={\color{black}\partial_x \left( \alpha\partial_x q\right)(0,0_+)}$. 
Integrating, the numerical flux results
\begin{align}
	f_{i+\frac{1}{2}}= 
	\left\lbrace \begin{array}{ll}
		\lambda \left\lbrace q_{i}^{n} +\frac{1}{2} \Delta x\Delta_i + {\color{black} \dfrac{\Delta t}{2}} \left[-\lambda \Delta_{i} +\beta \left( q_{i}^{n} +\frac{1}{2} \Delta x\Delta_i \right) +\left( \Delta\alpha\Delta\right)_{i}\right] \right\rbrace & \lambda>0,\\
		\lambda \left\lbrace q_{i+1}^{n} -\frac{1}{2} \Delta x\Delta_{i+1} + {\color{black} \dfrac{\Delta t}{2}} \left[-\lambda \Delta_{i+1} +\beta \left( q_{i+1}^{n} -\frac{1}{2} \Delta x\Delta_{i+1} \right) +\left( \Delta\alpha\Delta\right)_{i+1}\right] \right\rbrace & \lambda<0.
	\end{array}  \right. \label{eq:flux_adre}
\end{align}

\begin{remark}
	{\color{black}From} now on, we will focus on the development and analysis of the schemes for $\lambda >0$. The results for $\lambda <0$ can be {\color{black}obtained similarly}.
\end{remark}

{\color{black} \begin{remark}
		The whole scheme can be derived from the finite volume framework, resulting in an intercell numerical flux with two terms, one for the advection and one for the diffusion, see \cite{TT01} and \cite{TH09}. However, it is worth mentioning that through an appropriated choice of the approximation for the slopes it will result in the same numerical scheme than the one proposed in this paper.
		
		Furthermore, our motivation is to couple the numerical method developed in this paper to an existing three-dimensional Navier-Stokes, see \cite{BBCFSV15}, to this end the approach proposed here turns out to be very convenient and achieves the order of accuracy sought.
	\end{remark}}
	
	\subsection{Step 3. Approximation of the diffusion and reaction terms}
	Recall that second-order approximations to the integrals defining the finite volume method can be obtained via the mid-point rule approximation, see \cite{Toro}. This requires an approximation at the centre of the volume at the half time, which is achieved by a Taylor expansion and the use of the Cauchy-Kovalevskaya procedure, namely
	\[\overline{q_i^n}=q_i^n+\tau \left(-\lambda \Delta_i+\left(\Delta \alpha \Delta\right)_i +\beta q^{n}_i \right). \]
	
	\subsection{Step 3.1. Diffusion term}
	The approximation of the diffusion term becomes
	\begin{equation*}
		\overline{\left(\Delta \alpha \Delta\right)_i}=
		\frac{\overline{\alpha^n_{i+\frac{1}{2}}} \, \overline{\Delta_{i+\frac{1}{2}}}- \overline{\alpha^n_{i-\frac{1}{2}}}\, \overline{\Delta_{i-\frac{1}{2}}}}{\Delta x}=
		\frac{1
		}{\Delta x^2} \left[ \overline{\alpha^n_{i+\frac{1}{2}}} \left( \overline{q^n_{i+1}}-\overline{q^n_{i}}\right) 
		-\overline{\alpha^n_{i-\frac{1}{2}}} \left( \overline{q^n_{i}}-\overline{q^n_{i-1}}\right) \right], 
	\end{equation*}
	that is
	\begin{align}\overline{\left( \Delta\alpha \Delta\right)_{i}} = &  \frac{1}{\Delta x^2} \left\lbrace 
		\left[ \alpha^n_{i+\frac{1}{2}} + {\color{black} \frac{\Delta t}{2}} \partial_t \alpha_{i+\frac{1}{2}}\right]  \left[ 
		q_{i+1}^n-q_{i}^n + {\color{black} \frac{\Delta t}{2}}  \left(-\lambda \left(\Delta_{i+1}-\Delta_{i}\right)  \right. \right.\right. \notag\\
		& \left. \left. \left. + \left(\Delta\alpha\Delta\right)_{i+1} - \left(\Delta\alpha\Delta\right)_{i} + \beta\left(q_{i+1}^{n}-q_{i}^{n}\right)\right) 
		\right] 
		\right. \notag\\ &\left.
		+\left[ \alpha^n_{i-\frac{1}{2}} + {\color{black} \frac{\Delta t}{2}}  \partial_t \alpha_{i-\frac{1}{2}} \right] \left[
		q_{i-1}^n-q_{i}^n + {\color{black} \frac{\Delta t}{2}} \left(-\lambda \left(\Delta_{i-1}-\Delta_{i}\right)  \right. \right.\right. \notag\\
		& \left. \left. \left. + \left(\Delta\alpha\Delta\right)_{i-1} - \left(\Delta\alpha\Delta\right)_{i} + \beta\left(q_{i-1}^{n}-q_{i}^{n}\right)\right) 
		\right] 
		\right\rbrace  \label{eq:diffusion_adre}
	\end{align}
	{\color{black} where the time derivative of the viscous coefficient can be computed as
		\begin{equation} \partial_t \alpha_{i+\frac{1}{2}} \approx \frac{1}{2}\left( \frac{\alpha^n_{i+1}-\alpha^{n-1}_{i+1}}{\Delta t} + \frac{\alpha^n_{i}-\alpha^{n-1}_{i}}{\Delta t }\right).\end{equation}
		If $\alpha$ is a function depending on conservative variables (which would be the case, for example, of the viscous term for turbulent Navier-Stokes equations), we can also use the Cauchy-Kovalevskaya procedure.
	}

	\subsubsection{Step 3.2. Numerical source}
	Lumping together the contributions of the reactive and diffusion terms we get a {\it numerical source} term as follows
	\begin{equation}s_i=\beta \overline{q_i} = \beta\left[ q_i+\frac{\Delta t}{2}\left(-\lambda \Delta_i +\left( \Delta\alpha \Delta\right)_i+\beta q_i\right) \right]. \label{eq:source_adre}\end{equation}
	Gathering \eqref{eq:flux_adre}, \eqref{eq:diffusion_adre} and \eqref{eq:source_adre}, the numerical scheme for the advection-diffusion-reaction equation, with $\lambda>0$, reads
	\begin{align}
		q_i^{n+1}=& q_{i}^{n} - \frac{\lambda \Delta t}{\Delta x}\left\lbrace
		q_{i}^{n} +\frac{1}{2} \Delta x\Delta_i + {\color{black}\frac{\Delta t}{2}} \left[-\lambda \Delta_{i} +\beta \left( q_{i}^{n} +\frac{1}{2} \Delta x\Delta_i \right) +\left( \Delta\alpha\Delta\right)_{i}\right]
		\right. \notag\\ &\left.
		-q_{i-1}^{n} -\frac{1}{2} \Delta x\Delta_{i-1} -  {\color{black}\frac{\Delta t}{2}}  \left[-\lambda \Delta_{i-1} +\beta \left( q_{i-1}^{n} +\frac{1}{2} \Delta x\Delta_{i-1} \right) +\left( \Delta\alpha\Delta\right)_{i-1}\right]
		\right\rbrace \notag\\
		& {\color{black} +}\frac{\Delta t}{\Delta x^2} \left\lbrace
		\left[ \alpha^n_{i+\frac{1}{2}} + \frac{\Delta t}{2}\partial_t \alpha_{i+\frac{1}{2}}\right]  \left[ 
		q_{i+1}^n-q_{i}^n + \frac{\Delta t}{2}\left(-\lambda \left(\Delta_{i+1}-\Delta_{i}\right)  \right. \right.\right. \notag\\
		& \left. \left. \left. + \left(\Delta\alpha\Delta\right)_{i+1} - \left(\Delta\alpha\Delta\right)_{i} + \beta\left(q_{i+1}^{n}-q_{i}^{n}\right)\right) 
		\right] 
		\right. \notag\\ &\left.
		+\left[ \alpha^n_{i-\frac{1}{2}} + \frac{\Delta t}{2}\partial_t \alpha_{i-\frac{1}{2}} \right] \left[
		q_{i-1}^n-q_{i}^n + \frac{\Delta t}{2}\left(-\lambda \left(\Delta_{i-1}-\Delta_{i}\right)  \right. \right.\right. \notag\\
		& \left. \left. \left. + \left(\Delta\alpha\Delta\right)_{i-1} - \left(\Delta\alpha\Delta\right)_{i} + \beta\left(q_{i-1}^{n}-q_{i}^{n}\right)\right) 
		\right] \right\rbrace \notag\\
		& + \beta\Delta t \left[ q_i+\frac{\Delta t}{2}\left(-\lambda \Delta_i +\left( \Delta\alpha \Delta\right)_i+\beta q_i\right) \right]. \label{eq:adre_num_scheme}
	\end{align}
	Finally, taking into account the centred approach of the slopes, \eqref{eq:centred_slopes} and \eqref{eq:centred_2slopes}, one obtains
	\begin{align} 
		q_i^{n+1}=& q_{i}^{n} - c\left\lbrace
		\frac{2+r}{2}\left( q_{i}^{n}-q_{i-1}^{n}\right)  +\frac{2-2c+r}{8} \left( q_{i+1}^{{\color{black} n}}- q_{i}^{{\color{black} n}}-q_{i-1}^{{\color{black} n}}+q_{i-2}^{{\color{black} n}}\right)   
		\right. \notag\\ &\left. +\frac{\Delta t}{2 \Delta x^2}\left[ \alpha^{{\color{black} n}}_{i+\frac{1}{2}}\left(q^{{\color{black} n}}_{i+1}-q_{i}^{{\color{black} n}}\right)-2\alpha_{i-\frac{1}{2}}^{{\color{black} n}}\left(q_{i}^{{\color{black} n}}-q_{i-1}^{{\color{black} n}}\right) +\alpha_{i-\frac{3}{2}}^{{\color{black} n}}\left(q_{i-1}^{{\color{black} n}}-q_{i-2}^{{\color{black} n}}\right)  \right]
		\right\rbrace \notag\\
		& {\color{black} +} \frac{\Delta t}{\Delta x^2} \left\lbrace
		\left[ \alpha^n_{i+\frac{1}{2}} + \frac{\Delta t}{2}\partial_t \alpha_{i+\frac{1}{2}}\right]  \left[ 
		\frac{2+r}{2}\left( q_{i+1}^n-q_{i}^n\right)  -\frac{c}{4} \left(q^{{\color{black} n}}_{i+2}-q_{i+1}^{{\color{black} n}}  \right. \right. \right. \notag\\
		& \left. \left.  \left.-q^{{\color{black} n}}_{i}+q^{{\color{black} n}}_{i-1}\right) + \frac{\Delta t}{2\Delta x^2}\left( \alpha_{i+\frac{3}{2}}^{{\color{black} n}}\left(q_{i+2}^{{\color{black} n}}-q_{i+1}^{{\color{black} n}}\right)-2\alpha_{i+\frac{1}{2}}^{{\color{black} n}}\left(q_{i+1}^{{\color{black} n}}-q_{i}^{{\color{black} n}}\right)  \right.  
		\right. \right.\notag\\ &  \left. \left. \left. +\alpha_{i-\frac{1}{2}}^{{\color{black} n}}\left(q_{i}^{{\color{black} n}}-q_{i-1}^{{\color{black} n}}\right)  \right)
		\phantom{\frac{b}{b}}\!\!\! \right] +\left[ \alpha^n_{i-\frac{1}{2}} + \frac{\Delta t}{2}\partial_t \alpha_{i-\frac{1}{2}} \right] \left[
		\frac{2+r}{2}\left(q_{i-1}^n-q_{i}^n\right)
		\right. \right. \notag\\ &\left. \left.
		-\frac{c}{4} \left(q^{{\color{black} n}}_{i}-q_{i-2}^{{\color{black} n}}-q_{i+1}^{{\color{black} n}}+q_{i-1}^{{\color{black} n}}\right)  + \frac{\Delta t}{2\Delta x^2}\left[ 2\alpha_{i-\frac{1}{2}}^{{\color{black} n}}\left(q_{i}^{{\color{black} n}}-q_{i-1}^{{\color{black} n}}\right) \right.\right.\right. \notag\\
		&  \left. \left. \left. -\alpha_{i-\frac{3}{2}}^{{\color{black} n}}\left(q^{{\color{black} n}}_{i-1}-q_{i-2}^{{\color{black} n}}\right) -\alpha_{i+\frac{1}{2}}^{{\color{black} n}}\left(q_{i+1}^{{\color{black} n}}-q_{i}^{{\color{black} n}}\right) \right] \phantom{\frac{b}{b}}\!\!\!
		\right] \right\rbrace \notag\\
		& + r\left[ q_i^{{\color{black} n}}+\left(-\frac{c}{4} \left( q_{i+1}^{{\color{black} n}}-q_{i-1}^{{\color{black} n}}  \right) +\frac{\Delta t }{2\Delta x^2}\left[ \alpha_{i+\frac{1}{2}}^{{\color{black} n}}\left(q_{i+1}^{{\color{black} n}}-q_{i}^{{\color{black} n}}\right)\right. \right.\right. \notag\\
		& \left. \left. \left.-\alpha_{i-\frac{1}{2}}^{{\color{black} n}}\left(q_{i}^{{\color{black} n}}-q_{i-1}^{{\color{black} n}}\right)  \right]+\frac{r}{2} q_i^{{\color{black} n}}\right) \right]. \label{eq:adre_num_scheme_complete}
	\end{align}
	where $c=\frac{\lambda\Delta t}{\Delta x}$ denotes the Courant number and $r=\beta\Delta t$ is called the reaction number.

	\begin{remark}[Constant diffusion coefficient]
		If we consider a constant diffusion coefficient  {\color{black} and denote $\alpha \Delta_i^{(2)}=(\Delta \alpha \Delta)_i$}, scheme \eqref{eq:adre_num_scheme} reads
		\begin{align}
			q_i^{n+1} = & q_i^n - \frac{\lambda\Delta t}{\Delta x}
			\left\lbrace q_{i}^{n} +\frac{1}{2} \Delta x\Delta_i + {\color{black} \frac{\Delta t}{2}}  \left[-\lambda \Delta_{i} +\beta \left( q_{i}^{n} +\frac{1}{2} \Delta x\Delta_i \right) +\alpha \Delta_{i}^{(2)}\right] \right. \notag\\
			& \left. 
			- q_{i-1}^{n} +\frac{1}{2} \Delta x\Delta_{i-1} -{\color{black} \frac{\Delta t}{2}}  \left[-\lambda \Delta_{i-1} +\beta \left( q_{i-1}^{n} -\frac{1}{2} \Delta x\Delta_{i-1} \right) +\alpha \Delta_{i-1}^{(2)}\right]
			\right\rbrace \notag\\
			&  + \frac{\alpha \Delta t}{\Delta x^2} 
			\left\lbrace \left(q_{i+1}^{n}-2q_{i}^{n}+q_{i-1}^{n} \right) +{\color{black} \frac{\Delta t}{2}} \left[  -\lambda \left(\Delta_{i+1}-2\Delta_{i}+\Delta_{i-1} \right)  
			\right.\right.\notag\\
			& \left.\left. +\alpha \left(\Delta^{(2)}_{i+1}-2\Delta^{(2)}_{i}+\Delta^{(2)}_{i-1} \right)  +\beta \left(q_{i+1}^{n}-2q_{i}^{n}+q_{i-1}^{n} \right)\right] \right\rbrace \notag\\
			& + \beta \Delta t \left[q_i^{n}+{\color{black} \frac{\Delta t}{2}} \left(-\lambda \Delta_i +\alpha \Delta_i^{(2)}+\beta q_i^{n}\right) \right]. \label{eq:ladre_num_scheme}
		\end{align}
		 Hence, the scheme for the advection-diffusion-reaction equation with constant diffusion coefficient becomes
		\begin{align} 
			q_i^{n+1}=& q_{i}^{n} - c\left\lbrace
			\frac{2+r}{2}\left( q_{i}^{n}  -q_{i-1}^{n}\right) +\frac{2-2c+r}{8} \left( q^{n}_{i+1}-q^{n}_{i-1}-q^{n}_{i}+q^{n}_{i-2}\right)   
			\right. \notag\\ &\left.
			+\frac{d}{2}\left[ q^{n}_{i+1}-3q^{n}_{i}+3q^{n}_{i-1}-q^{n}_{i-2} \right]
			\right\rbrace \notag\\
			& {\color{black} +}d\left\lbrace
			q_{i+1}^n-2q_{i}^n +q_{i-1}^n -\frac{c}{4} \left(q^{n}_{i+2}-2q^{n}_{i+1}  +2q^{n}_{i-1}-q^{n}_{i-2}\right)  \right. \notag\\
			&  \left.  + \frac{d}{2}\left[ q^{n}_{i+2}-4q^{n}_{i+1}+6q^{n}_{i}-4q^{n}_{i-1} +q^{n}_{i-2}\right] + \frac{r}{2}\left(q^{n}_{i+1}-2q^{n}_{i}+q_{i-1}^{n}\right)
			\right\rbrace \notag\\
			& + r\left[ q^{n}_i-\frac{c}{4} \left( q^{n}_{i+1}-q^{n}_{i-1}  \right) +\frac{d}{2}\left[ q^{n}_{i+1}-2q^{n}_{i}+q^{n}_{i-1}  \right]+\frac{r}{2} q^{n}_i\right]. \label{eq:ladre_num_scheme_complete}
		\end{align}
		where $d=\frac{\alpha \Delta t}{\Delta x^2}$.
	\end{remark}

	\begin{remark}[Advection-reaction equation]
		Assuming zero diffusivity, the scheme for the linear advection-reaction equation is recovered from \eqref{eq:adre_num_scheme},
		\begin{align}
			q_i^{n+1} = & q_i^n - \frac{\lambda\Delta t}{\Delta x}
			\left\lbrace q_{i}^{n} +\frac{1}{2} \Delta x\Delta_i + {\color{black} \frac{\Delta t}{2}}  \left[-\lambda \Delta_{i} +\beta \left( q_{i}^{n} +\frac{1}{2} \Delta x\Delta_i \right) \right] \right. \notag\\
			& \left. 
			- q_{i-1}^{n} +\frac{1}{2} \Delta x\Delta_{i-1} - {\color{black} \frac{\Delta t}{2}}  \left[-\lambda \Delta_{i-1} +\beta \left( q_{i-1}^{n} -\frac{1}{2} \Delta x\Delta_{i-1} \right) \right]
			\right\rbrace \notag\\
			& + \beta \Delta t \left[q_i^{n}+\frac{\Delta t}{2}\left(-\lambda \Delta_i +\beta q_i^{n}\right) \right].\label{eq:lare_num_scheme}
		\end{align}
		Furthermore, using centred slopes we get
		\begin{align} 
			q_i^{n+1}=& q_{i}^{n} - c \left[
			\frac{2+r}{2}\left( q_{i}^{n}-q_{i-1}^{n}\right)  +\frac{2-2c+r}{8} \left( q^{n}_{i+1}- q^{n}_{i}-q^{n}_{i-1}+q^{n}_{i-2}\right)    \right]\notag\\
			& + r\left[ q^{n}_i-\frac{c}{4} \left( q^{n}_{i+1}-q^{n}_{i-1}  \right) +\frac{r}{2} q^{n}_i \right]. \label{eq:lare_num_scheme_complete}
		\end{align}
	\end{remark}
	
	\begin{teorema}
		Schemes {\color{black} \eqref{eq:adre_num_scheme_complete}}, \eqref{eq:ladre_num_scheme_complete} and {\color{black}\eqref{eq:lare_num_scheme_complete}} are second order in space and time.
	\end{teorema}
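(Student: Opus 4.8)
The plan is to establish all three statements via a local truncation error (LTE) analysis, which is the natural tool here and presumably what \ref{sec:appendix} carries out. First I would define the LTE by inserting the exact solution $q(x,t)$ of \eqref{eq:adre} into the right-hand side of each scheme, interpreting every stencil value $q_{i+k}^n$ and every coefficient $\alpha_{i+k/2}^n$ as a point value of this exact solution, and measuring the residual against $q(x_i,t^{n+1})$. Writing a scheme abstractly as $q_i^{n+1}=\mathcal{H}\bigl(\{q_{i+k}^n\}\bigr)$, the truncation error is
\[
\tau_i^n=\frac{1}{\Delta t}\left(q(x_i,t^{n+1})-\mathcal{H}\bigl(\{q(x_{i+k},t^n)\}\bigr)\right),
\]
and the goal is to show $\tau_i^n=O(\Delta x^2)+O(\Delta t^2)$ under the usual assumption that $c=\lambda\Delta t/\Delta x$ and $d=\alpha\Delta t/\Delta x^2$ remain bounded.

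Next I would reduce the three claims to one. The most general scheme is \eqref{eq:adre_num_scheme_complete} with space- and time-dependent $\alpha$; the constant-coefficient scheme \eqref{eq:ladre_num_scheme_complete} follows by freezing $\alpha$ (so that $\partial_t\alpha\equiv 0$ and all half-point values coincide), and the advection–reaction scheme \eqref{eq:lare_num_scheme_complete} follows by setting $d=0$. Hence it suffices to verify second order for the variable-coefficient case once, and the other two are immediate specializations.

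The core computation is a systematic Taylor expansion about $(x_i,t^n)$: expand each stencil value $q_{i+k}^n$ to third order in $\Delta x$, expand $q(x_i,t^{n+1})$ to second order in $\Delta t$, and expand every diffusion coefficient $\alpha_{i+k/2}^n$ together with its approximated time derivative $\partial_t\alpha_{i+k/2}$. Collecting by powers of $\Delta x$ and $\Delta t$, the leading terms must reproduce, after division by $\Delta t$, the PDE \eqref{eq:adre}: the advective differences yield $-\lambda\partial_x q$, the nested $\left(\Delta\alpha\Delta\right)$ operator yields $\partial_x(\alpha\partial_x q)$, and the reaction block yields $\beta q$. Second-order accuracy then rests on the Cauchy–Kovalevskaya structure built into the numerical flux and source: the half-time corrections carrying the factor $\Delta t/2$ are exactly those needed to turn $q(x_i,t^{n+1})=q+\Delta t\,\partial_t q+\tfrac12\Delta t^2\partial_{tt}q+\cdots$ into the spatial operators of the scheme. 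I would use \eqref{eq:adre} together with its differential consequence
\[
\partial_{tt}q=\partial_t\bigl(-\lambda\partial_x q+\partial_x(\alpha\partial_x q)+\beta q\bigr)
\]
to eliminate pure and mixed time derivatives in favour of spatial ones, then check that the resulting expression matches the $O(\Delta t)$ and $O(\Delta x)$ contributions of the scheme, leaving a residual of order $\Delta x^2$, $\Delta t^2$ and $\Delta x\,\Delta t$.

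The hard part will be the bookkeeping for the spatially variable diffusion term. Because $\left(\Delta\alpha\Delta\right)_{i\pm1}$ are themselves nested difference operators at neighbouring points, the products $\bigl[\alpha_{i\pm1/2}+\tfrac{\Delta t}{2}\partial_t\alpha_{i\pm1/2}\bigr]\cdot[\,\cdots]$ appearing in \eqref{eq:diffusion_adre} generate many cross terms mixing derivatives of $\alpha$ with derivatives of $q$. The delicate point is to confirm that these combine into the correct second-order approximation of $\partial_x(\alpha\partial_x q)$ and its time derivative, and in particular that the one-sided values $\alpha_{i-3/2}$ and $\alpha_{i+3/2}$ introduced by the upwind treatment of the diffusion term do not destroy the symmetry required for the $O(\Delta x)$ error to cancel. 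Once the variable-coefficient case is verified, the constant-$\alpha$ and zero-diffusion reductions complete the proof.
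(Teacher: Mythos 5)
Your proposal follows essentially the same route as the paper's appendix: a local truncation error analysis by Taylor expansion about $(x_j,t^n)$, with the Cauchy--Kovalevskaya identity $\partial_t^{(2)}q=\partial_t\bigl(-\lambda\partial_x q+\partial_x(\alpha\partial_x q)+\beta q\bigr)$ used to cancel the residual $O(\Delta t)$ terms, including the check that the one-sided $\alpha_{i\pm 3/2}$ stencils only need first-order accuracy since they carry a factor $\Delta t/2$. The only organizational difference is that the paper proves the advection--reaction case separately and then adds the three diffusion contributions term by term, whereas you prove the general variable-coefficient case once and obtain the other two schemes as algebraic specializations; both orderings are valid and rest on the same computation.
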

	\begin{proof}
		The detailed proof is included in \ref{sec:appendix}.
	\end{proof}
	
	\begin{remark}
		In case the reconstruction done in Step 1 is done with constant polynomials and the half in time evolution of the variables given by the Taylor series expansion is neglected, the resulting scheme reduces to
		\begin{align}
			q_i^{n+1}=& q_{i}^{n} - c\left( q_{i}^{n} -q_{i-1}^{n} \right)  -\frac{\Delta t}{\Delta x^2} \left[
			\alpha^n_{i+\frac{1}{2}}  \left(
			q_{i+1}^n-q_{i}^n\right)
			+\alpha^n_{i-\frac{1}{2}}\left( q_{i-1}^n-q_{i}^n \right)  \right]  + r  q_i^n. \label{eq:adre_o1}
		\end{align}
		which is a first order in time and space scheme for the advection-diffusion-reaction equation \eqref{eq:adre}.
	\end{remark}

\section{MUSCL-Hancock}\label{sec:muscl_hancock}
The MUSCL-Hancock method, originally credited to Hancock in \cite{VL97}, is extended here to account for the source and diffusion terms. The extension is motivated by the ADER framework introduced earlier. First recall that the MUSCL-Hancock method for the homogeneous linear advection equation has the following steps:
\begin{description}
	\item[Step 1.] Data reconstruction.	
	First-degree polynomial for a cell $i$ are used, namely 
	\[p_i(x)=q_{i}^{n}+\Delta_i(x-x_i) .\]    
	
	\item[Step 2.] Computation of boundary extrapolated values.	      
	Cell boundary values are computed by simply evaluating the polynomials appropriately
	\[q_{i}^{L}=p_i(x_{i-\frac{1}{2}})=q_{i}^{n}-\frac{1}{2}\Delta x\Delta_i,\]
	\[q_{i}^{R}=p_i(x_{i+\frac{1}{2}})=q_{i}^{n}+\frac{1}{2}\Delta x\Delta_i.\]
	
	\item[Step 3.] Evolution of boundary extrapolated values.	
	Boundary-extrapolated values are evolved by half a time step,
	\begin{eqnarray*}
		\bar{q}^R_i= q^R_i-\frac{\Delta t}{2\Delta x} \left(f\left( q^R_i\right) -f\left( q^L_i\right)  \right), \\
		\bar{q}^L_i= q^L_i-\frac{\Delta t}{2\Delta x} \left(f\left( q^R_i\right) -f\left( q^L_i\right)  \right).
	\end{eqnarray*}     
	
	\item[Step 4.] Solution of the Riemann problem and numerical flux.	
	The evolved boundary-extrapolated values are used to define a classical Riemann problem at each 
	intercell boundary,
	\begin{equation*}
		\left\lbrace\begin{array}{l}
			\partial_t q \left(x,t \right) +\lambda\partial_x q\left(x,t \right)=0,\\
			q(x,0)=\left\lbrace \begin{array}{ll}
				\bar{q}_{i}^{R} & x<0, \\
				\bar{q}_{i+1}^{L} & x>0, 
			\end{array}\right. \end{array}\right.
	\end{equation*}
	the solution of which is
	\begin{equation*}
		q(x,t)=\left\lbrace\begin{array}{ll}
			\bar{q}_i^R& \dfrac{x}{t}<\lambda,\\
			\bar{q}_{i+1}^L& \dfrac{x}{t}>\lambda.
		\end{array} \right.
	\end{equation*}
	Hence, the sought intercell flux is given by
	\begin{equation*}
		f^{MH}_{i+\frac{1}{2}}= 
		\left\lbrace\begin{array}{lr}
			\lambda \bar{q}_i^R=\lambda \left(q^n_i + \frac{1-c}{2}\Delta x\Delta_i  \right)  &  \lambda>0,\\
			\lambda \bar{q}_{i+1}^L=\lambda \left(q^n_{i+1}-\frac{1+c}{2} \Delta x\Delta_{i+1} \right) &  \lambda<0.
		\end{array} \right.
	\end{equation*}    	     
	Note that if no reconstruction is performed, the MUSCL-Hancock method reduces to the Godunov first-order method, with the particular numerical flux employed in the last step.
	
	By choosing centred slopes, as already done for ADER, and assuming $\lambda>0$ (the discussion of the case $\lambda<0$ is analogous) we obtain the MUSCL-Hancock scheme for the linear advection equation:
	\begin{equation}
		q_{j}^{n+1} =   q_{j}^{n}-c\left[q_{j}^{n}-q_{j-1}^{n}+\frac{1-c}{4}\left(q_{j+1}^{n}-q_{j}^{n}-q_{j-1}^{n}+q_{j-2}^{n} \right)\right].\label{eq:lae_num_scheme}
	\end{equation}
\end{description}

\subsection{Source and diffusion terms}
The inclusion of reaction and diffusion terms is accomplished by modifying Step 3, in which such terms at the half time are added to {\color{black} the} evolution of boundary extrapolated values. Thus we obtain:
\begin{gather}
	\bar{q}^R_i= q^R_i-\frac{\Delta t}{2}\left\lbrace \frac{1}{\Delta x} \left(f\left( q^R_i\right) -f\left( q^L_i\right)  \right) 
	\right. \notag\\ \left.
	- \frac{1}{\Delta x^2} {\color{black} g\left(\left(\alpha \Delta q\right)^R_i,\left(\alpha \Delta q\right)^L_i \right) } -
	s\left( q^R_i\right)\right\rbrace\\
	\bar{q}^L_i= q^L_{i+1}-\frac{\Delta t}{2}\left\lbrace \frac{1}{\Delta x}\left(f\left( q^R_{i+1}\right) -f\left( q^L_{i+1}\right) \right)
	\right. \notag\\ \left.
	- \frac{1}{\Delta x^2}{\color{black} g\left(\left(\alpha \Delta q\right)^R_{i+1},\left(\alpha \Delta q\right)^L_{i+1} \right)
	}
	-s\left( q^L_{i+1}\right) 
	\right\rbrace
\end{gather}
with
\[g\left(\left(\alpha \Delta q\right)^R_i,\left(\alpha \Delta q\right)^L_i \right)= \alpha^{n}_{i+\frac{1}{2}} \left(q^{n}_{i+1}-q^{n}_{i}\right) -  \alpha^{n}_{i-\frac{1}{2}} \left(q^{n}_{i}-q^{n}_{i-1}\right). \]
The final step is as before, that is, the numerical flux is computed by solving the Riemann problem for the linear advection equation with evolved boundary-extrapolated values as initial conditions. Just as ADER, the numerical flux includes the contribution due to diffusion and source terms. Additional contributions to the scheme resulting from diffusion and reaction are accounted for by following the ADER approach introduced in Section \ref{sec:ader}.

\begin{remark}
	The resulting schemes for the linear advection-diffusion-reaction equation, constructed from the ADER and MUSCL-Hancock approaches, are algebraically identical.
\end{remark}

\section{Stability analysis}\label{sec:stability}
The stability analysis of the obtained schemes is divided into two cases. On the one hand, linear advection equation allows for an easy computation of the stability region. On the other hand, advection-diffusion-reaction equation with constant diffusion coefficient will be analysed thanks to graphical representation.

\subsection{Linear advection equation}
Stability analysis of linear models is done following von Neumann stability analysis procedure, {\color{black} see \cite{VC15}, \cite{Str04}}. 
Let us consider the trial function \[q^{n}_i=A^{n}e^{I \theta i},\] where $A\in\mathbb{C}$ represents an amplitude, $I$ denotes the complex unity so that $i$ is kept for the mesh, and $\theta=P\Delta x$ is an angle with $P$ the wave number in the $x$-direction. Then, \eqref{eq:lae_num_scheme} yields to
\begin{gather*}
	A^{n+1}e^{I\theta i}= A^{n}e^{I\theta i}- c \left[ A^{n}e^{I\theta i}- A^{n}e^{I\theta \left( i-1\right) } + \frac{1-c}{4} \left(A^{n}e^{I\theta \left( i-1\right) } \right.\right.\\ \left.\left. -A^{n}e^{I\theta i }  -A^{n}e^{I\theta \left( i-1\right) } +A^{n}e^{I\theta \left( i-2\right) } \right)  \right],
\end{gather*}
\begin{equation*}
	A= 1- c \left[ 1- e^{-I\theta} + \frac{1-c}{4} \left(e^{-I\theta} -1 -e^{-I\theta } +e^{-2I\theta } \right)  \right],
\end{equation*}
hence,
\begin{equation*}
	\left\| A\right\|^2  
	= c \left( c-1\right) \left(\cos\theta-1 \right)^2 \left[
	\frac{1}{2}c\left( c-1\right) \left( \cos\theta+1\right) +1
	\right] +1 .
\end{equation*}
The stability condition, $\left\|A\right\|^2\leq 1$, is verified if and only if
\begin{gather*}
	c \left( c-1\right) \left(\cos\theta-1 \right)^2 \left[
	\frac{1}{2}c\left( c-1\right) \left( \cos\theta+1\right) +1 \right] \leq 0\\
	\Leftrightarrow 
	c\left( c-1\right) \left( \cos\theta+1\right)  \geq -2 \textrm{ and } c\leq 1.
\end{gather*}
From which it follows that the scheme is stable if the Courant number, $c$ lies between zero and unity; it is conditionally stable with stability condition \[0\leq c\leq 1.\]

Sometimes the amplification factor is a difficult expression to deal with. In order to make it easier, we can represent
the value of the function of the binomial expression of the amplification factor,
\renewcommand{\arraystretch}{1.2}
\begin{equation}
	\begin{array}{ccc}
		A:\left[-\pi,\pi\right] \times \mathbb{R} &\longrightarrow & \mathbb{C} \\ 
		(\theta,c) &\rightsquigarrow & A(\theta,c),
	\end{array}
\end{equation}
for different values of $c${\color{black},}  see \cite{Hoff89}. 
In Figure \ref{fig:mh_hoffman}, we can observe that the functions whose image is completely contained in the square $[-1,1]\times [-1,1] \subset \mathbb{C}$ are defined for $c\in[0,1]$. This agrees with the analytical results already obtained. 

\begin{figure}
	\centering
	\includegraphics[width=0.9\linewidth]{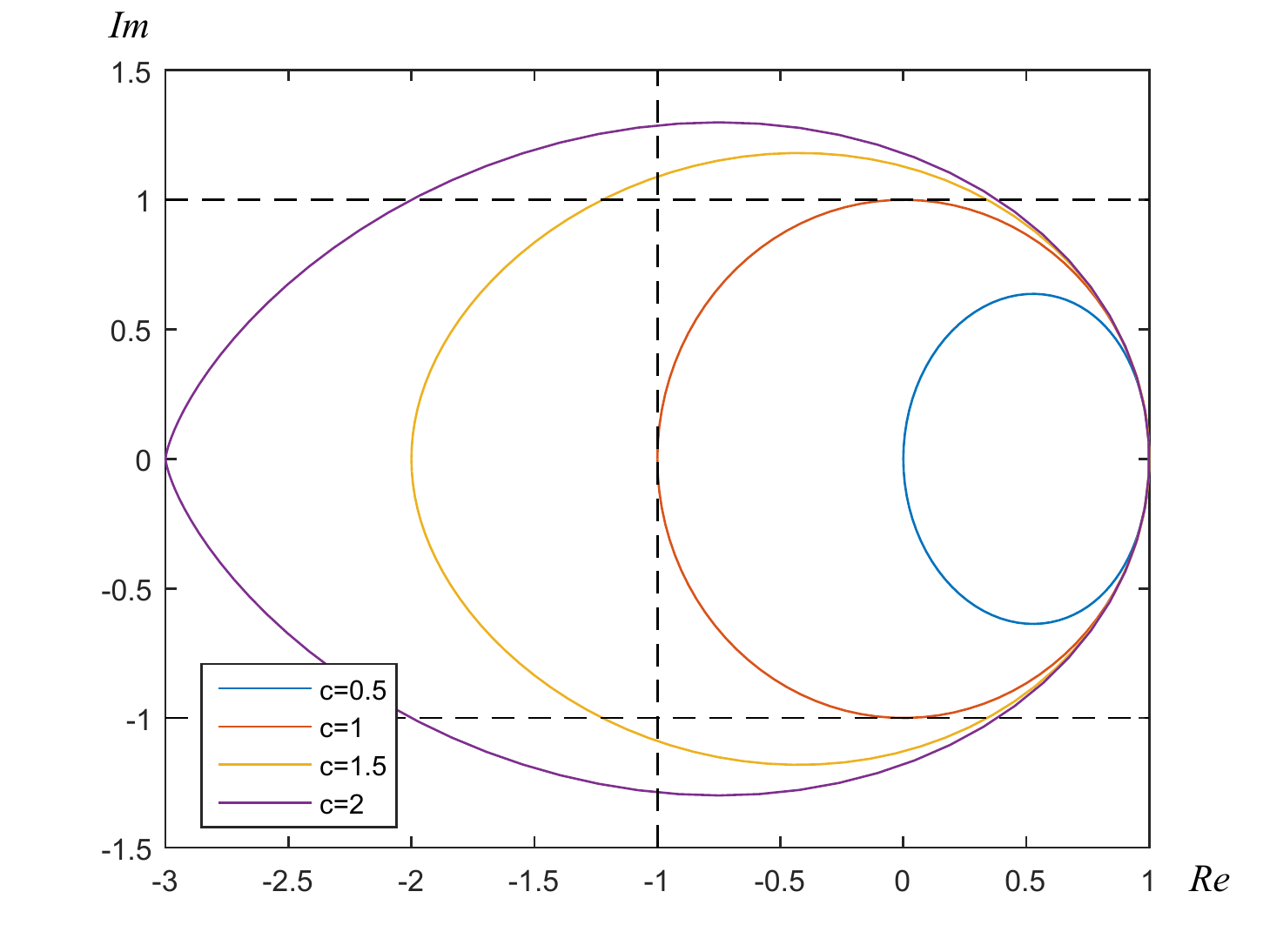}
	\caption{Representation in the complex plane of the amplification factor values for several {\color{black} values of} $c$ with $\theta\in \left[-\pi,\pi\right]$. {\color{black} It can be observed that $c$ is bounded above by 1 when $\left\|A \right\|$ is bounded by 1}.}
	\label{fig:mh_hoffman}
\end{figure}

\begin{figure}
	\centering
	\includegraphics[width=0.495\linewidth]{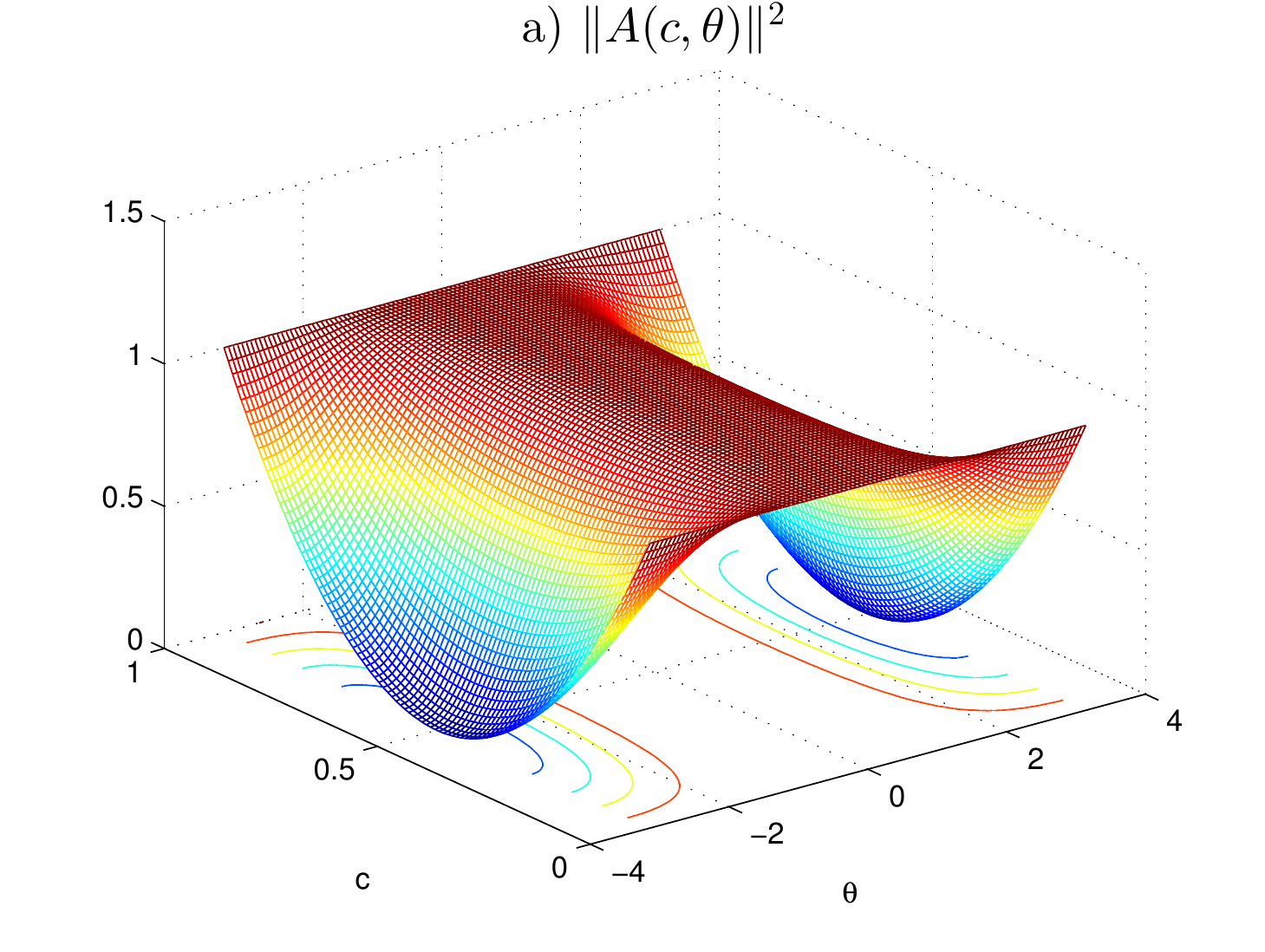}
	\hfill \includegraphics[width=0.495\linewidth]{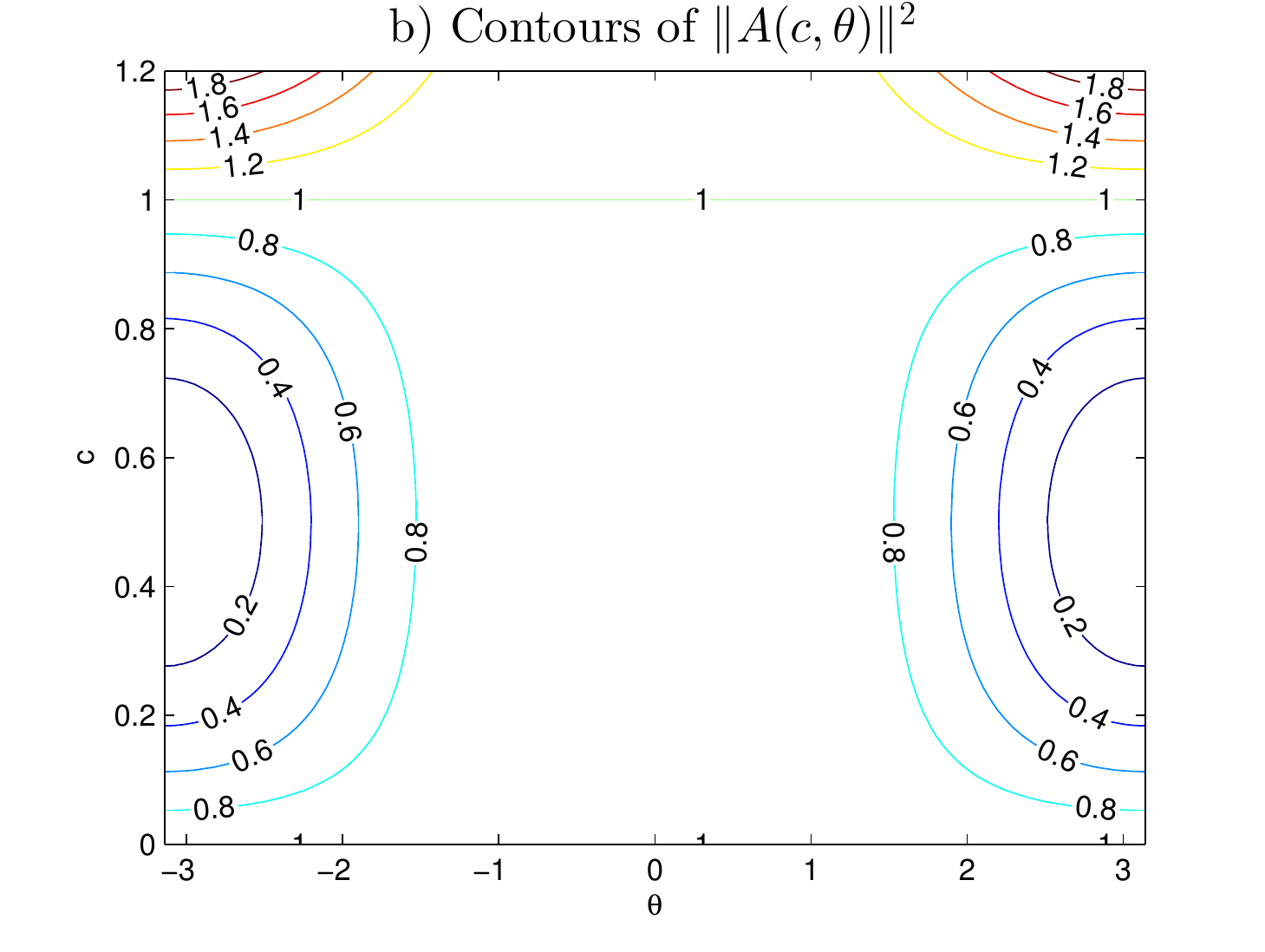}
	\caption{Graph and contour lines of $\left\|A(\theta,c) \right\|^2$ for scheme \eqref{eq:lae_num_scheme}. The stability region is clearly determined for $0\leq c\leq 1$.}
	\label{fig:mh_c_a2}
\end{figure}

On the other hand, we can plot the function defined by the norm of the amplification factor, $\left\| A(\theta,c)\right\|^2\in\mathbb{R}$.
As $A$ depends on two variables, $\theta$ and $c$, the plot, Figure \ref{fig:mh_c_a2}a, is a surface in $\mathbb{R}^3$.
Drawing the contour lines 
we can confirm that the stability condition is verified if and only if  $0\leq c\leq 1$.
In Figure \ref{fig:mh_c_a2}b we consider $c\in [0,1.2]$ to remark that for any {\color{black} chosen} $c_{0}>1$ there exist $\theta_{c_{0}} \in \left[-\pi,\pi\right]$ such that the values of $\left\| A(\theta_{c_{0}},c_{0})\right\|$ are larger than one.

\subsection{Linear advection-diffusion-reaction equation}
The amplification factor of scheme \eqref{eq:ladre_num_scheme}, which depends on the angle $\theta$ and on the parameters $c=\frac{\lambda\Delta t}{\Delta x}$, $d=\frac{\alpha\Delta t}{\Delta x^2}$ and $r=\beta\Delta t$, is computed using the von Neumann procedure obtaining
\begin{align*}
	A= &1-c \left\lbrace 1- \cos \theta +I\sin\theta +\frac{1-c}{4} \left(2I\sin \theta -1 + \cos \left( 2\theta\right) -I\sin \left(2\theta \right)  \right)\right.\\
	& +\frac{r}{2}\left[1-\cos\theta +I\sin \theta +\frac{1}{4}\left(2I\sin\theta-1+ \cos \left( 2\theta\right) -I\sin \left(2\theta \right)\right) \right] \\
	& \left.  +\frac{d}{2}\left( 4\cos\theta -2I\sin\theta -3 -\cos\left( 2\theta-I\sin\left( 2\theta\right) \right) \right)\right\rbrace\\
	& +d\left\lbrace 2\cos\theta-2 -\frac{c}{4}\left(2I\sin\left( 2\theta\right) -6I\sin\theta \right) 
	+\frac{d}{2}\left(2\cos\left(2\theta\right)-8\cos\theta+6 \right)\right.\\
	&\left. +\frac{r}{2}\left( 2\cos\theta-2\right) \right\rbrace+r\left\lbrace 1-\frac{c}{2}I\sin\theta +d\left( \cos\theta -1\right) +\frac{r}{2}\right\rbrace.
\end{align*}

As the bounds of $c,\,d \, \textrm{and} \, r$ in order to limit the amplification factor are interdependent,
the computation of the constraints will produce {\color{black} complicated} expressions.
Still, a graphical representation provides us with a good approach to determine the stability region.

\vspace{0.5cm}
\noindent \textbf{Linear advection-reaction equation}

\vspace{0.2cm}
The function related with the amplification factor of the linear advection-reaction equation results
\begin{align*}
	A: [-\pi,\pi]\times\mathbb{R}\times\mathbb{R}\; &\longrightarrow \;\;\; \mathbb{R}\\
	(\theta,c,r)\; &\longrightarrow  \; A(\theta,c,r).
\end{align*}
Its graph is embedded in $\mathbb{R}^4$, therefore, instead of plotting contour lines, we represent the isosurface of level one
which splits $\mathbb{R}^3$ in two domains (see Figure \ref{fig:stability_c_r_d0b}).
One of them, which contains the point $(\theta,c,r)=(0,0,0)$, is the stability region of the scheme.
Inside the other domain the scheme is unconditionally unstable.
Furthermore, the orthogonal planes  to the $r$-axis, that is, the planes resulting for a fixed value of $r$, provide the contour plots of level one for the linear advection-reaction equation related to the set $r$ (see the two-dimensional subplots of Figure \ref{fig:stability_c_r_d0b} where $S$ denotes the stability region of the scheme).
\begin{figure}
	\centering
	\hspace*{-1cm}\includegraphics[width=1.15\linewidth]{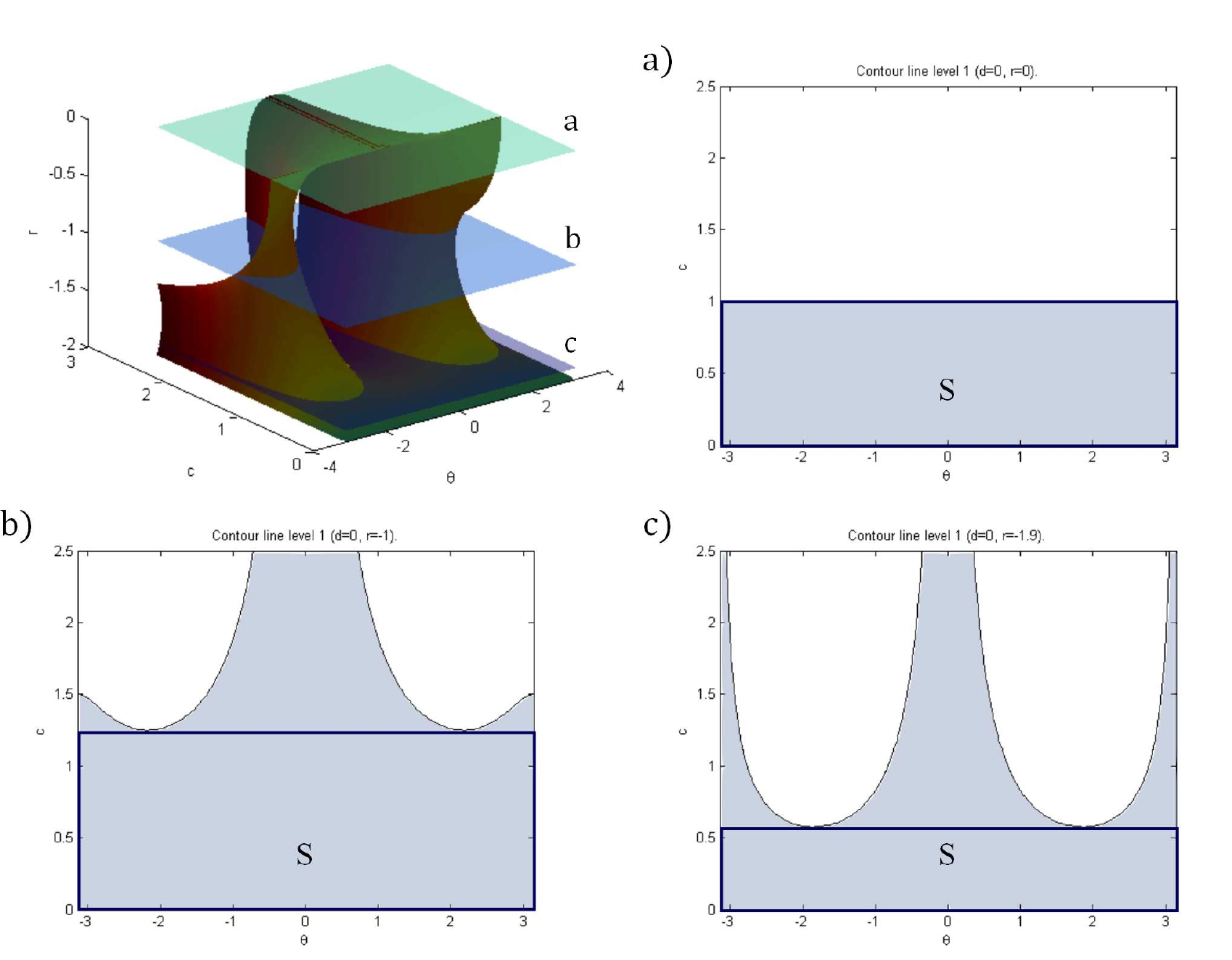}
	\caption{Stability region for the linear advection-reaction equation. The isosurface of level one splits $\mathbb{R}^3$ into the stability region, containing the origin, and the unstable region. Subplots a), b), c) represent the contour plot of level one for the fixed values of $r=0$, $r=-1$ and $r=-1.9$ respectively. The shaded regions correspond to the stability region. The blue rectangles identified as $S$ are the admissible regions of stability.}
	\label{fig:stability_c_r_d0b}
\end{figure}

For instance, assuming $r=-1$ the value $c=1.1$ guarantees the stability. 
However, we must carefully  analyse these results.
For a specific problem, with a given mesh, setting $r=-1$ does not imply that the $\Delta t$ is such that $c=1.1$ and vice versa.
A particular example will help us to understand {\color{black}the situation better}.
We consider the linear advection-reaction equation
\[\partial_t q(x,t)+\lambda_0\partial_x q(x,t)=\beta_0 q(x,t)\]
with fixed $\lambda_0\in \mathbb{R}^{+},\beta_0\in \mathbb{R}^{-}$.
If the mesh size is $\Delta x=\Delta x_0$, then it is verified
\begin{equation}
	\left\lbrace \begin{array}{l} r=\beta_0 \Delta t,\\[10pt]
		c=\dfrac{\lambda_0 \Delta t}{\Delta x_0}.\end{array} \right.
	\label{eq:r_c_relation}
\end{equation}
So, if $\Delta t$ is computed from $r=r_0$ fixed, then \begin{equation}c=\frac{\lambda_0 r_0}{\beta_0 \Delta x_0}\label{eq:c_relation_r}\end{equation} is determined.
Similarly, given $c=c_0$ the value of $\Delta t$ is resolved and \begin{equation}r=\frac{\beta_0c_0\Delta x_0}{\lambda_0}.\label{eq:r_relation_c}\end{equation}
Hence, for $r=-1$  the value of $c$ is determined and can be different from $1.1$. In case it is bigger, we would have fallen into the unstable region.

To avoid the previous trouble, we define rectangular cuboids
\begin{equation}O_{c,r}=\left\lbrace (\theta,c,r)\; |\; \theta\in[-\pi,\pi],\; c\in [0,c_{M}],\; r \in[r_{m},0], \; c_{M}\in\mathbb{R}^{+}, \; r_{m}\in\mathbb{R}^{-}\right\rbrace\label{eq:rc_cr}\end{equation}
embedded in the stability region.
Selecting $c_{M}=1$, the upper bound of $c$,  and $r_{m}=-1$, the lower bound of of $c$, the resulting scheme is stable.

\vspace{0.4cm}
\noindent \textbf{Linear advection-diffusion equation}

\vspace{0.2cm}
The previous procedure can also be applied for the linear advection-diffusion equation.
Hence, we consider
\begin{align*}
	A: [-\pi,\pi]\times\mathbb{R}\times\mathbb{R}\; &\longrightarrow \;\;\; \mathbb{R}\\
	(\theta,c,d)\; &\longrightarrow  \; A(\theta,c,d)
\end{align*}
and
\begin{equation}O_{c,d}=\left\lbrace (\theta,c,d)\; |\; \theta\in[-\pi,\pi],\; c\in [0,c_{M}],\; d \in[0,d_{M}], \; c_{M},\, d_{M}\in\mathbb{R}^{+}\right\rbrace.\label{eq:rc_cd}\end{equation}
In Figure \ref{fig:stability_c_r0_d_19b}, we can observe that $c_{M}=1$ and $d_{M}=0.5$ generate an admissible cuboid.

\vspace{0.4cm}
\noindent \textbf{Linear advection-diffusion-reaction equation}

\vspace{0.2cm}
As {\color{black}the} last step, we study the stability for the linear advection-diffusion-reaction equation. The amplification factor function reads
\begin{align*}
	A: [-\pi,\pi]\times\mathbb{R}\times\mathbb{R}\times\mathbb{R}\; &\longrightarrow \;\;\; \mathbb{R}\\
	(\theta,c,d,r)\; &\longrightarrow  \; A(\theta,c,d,r).
\end{align*}
so its graph belongs to $\mathbb{R}^5$ and the isosurfaces are embedded in  $\mathbb{R}^4$.
Admissible regions can be established through  4-orthotopes,
\begin{align} O_{c,d,r}=\left\lbrace (\theta,c,r,d)\; |\; \theta\in[-\pi,\pi],\; c\in [0,c_{M}],\; d \in[0,d_{M}], \; r \in[r_{m},0],\right.\\ \left. \; c_{M},\, d_{M}\in\mathbb{R}^{+}, \; r_{m}\in\mathbb{R}^{-}\right\rbrace. \label{eq:rc_cdr}\end{align}

To get an idea of the shape of the stability region, 
we can picture an evolutionary problem where one of the variables, for instance $r$, plays the role of the time and the remaining {\color{black}ones} are considered as spacial variables.  
Therefore, the stability region is determined by the intersection of the stability regions for the different snapshots of $r$.
Figures \ref{fig:stability_c_r0_d_19b}-\ref{fig:stability_c_r1_d_19b}  show the graphs obtained for fixed values of $r$.
We can conclude that $c_{M}=1$, $d_{M}=\frac{1}{4}$ and $r_{m}=-\frac{1}{2}$ define a 4-orthotope embedded in the stability region.

\begin{figure}
	\centering
	\hspace*{-1cm}\includegraphics[width=1.15\linewidth]{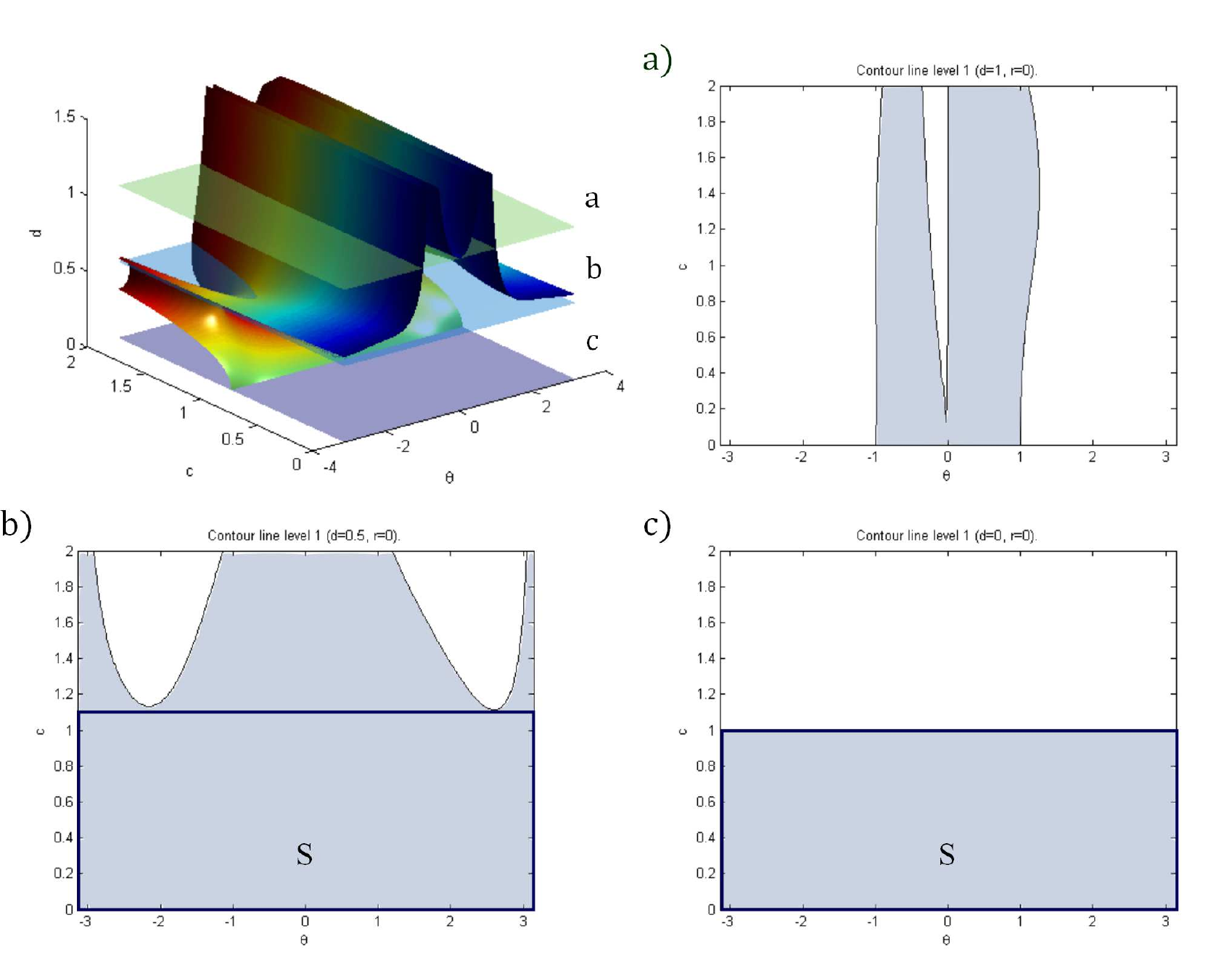}
	\caption{Stability region for the linear advection-diffusion equation. The isosurface of level one splits $\mathbb{R}^3$ into the stability region, containing the origin, and the unstable region. Subplots a), b), c) represent the contour plot of level one for the fixed values of {\color{black}$d=1$}, $d=0.5$ and {\color{black}$d=0$} respectively. The shaded regions correspond to the stability region. The blue rectangles identified as $S$ are the admissible regions of stability.}
	\label{fig:stability_c_r0_d_19b}
\end{figure}

\begin{figure}
	\centering
	\hspace*{-1cm}\includegraphics[width=1.15\linewidth]{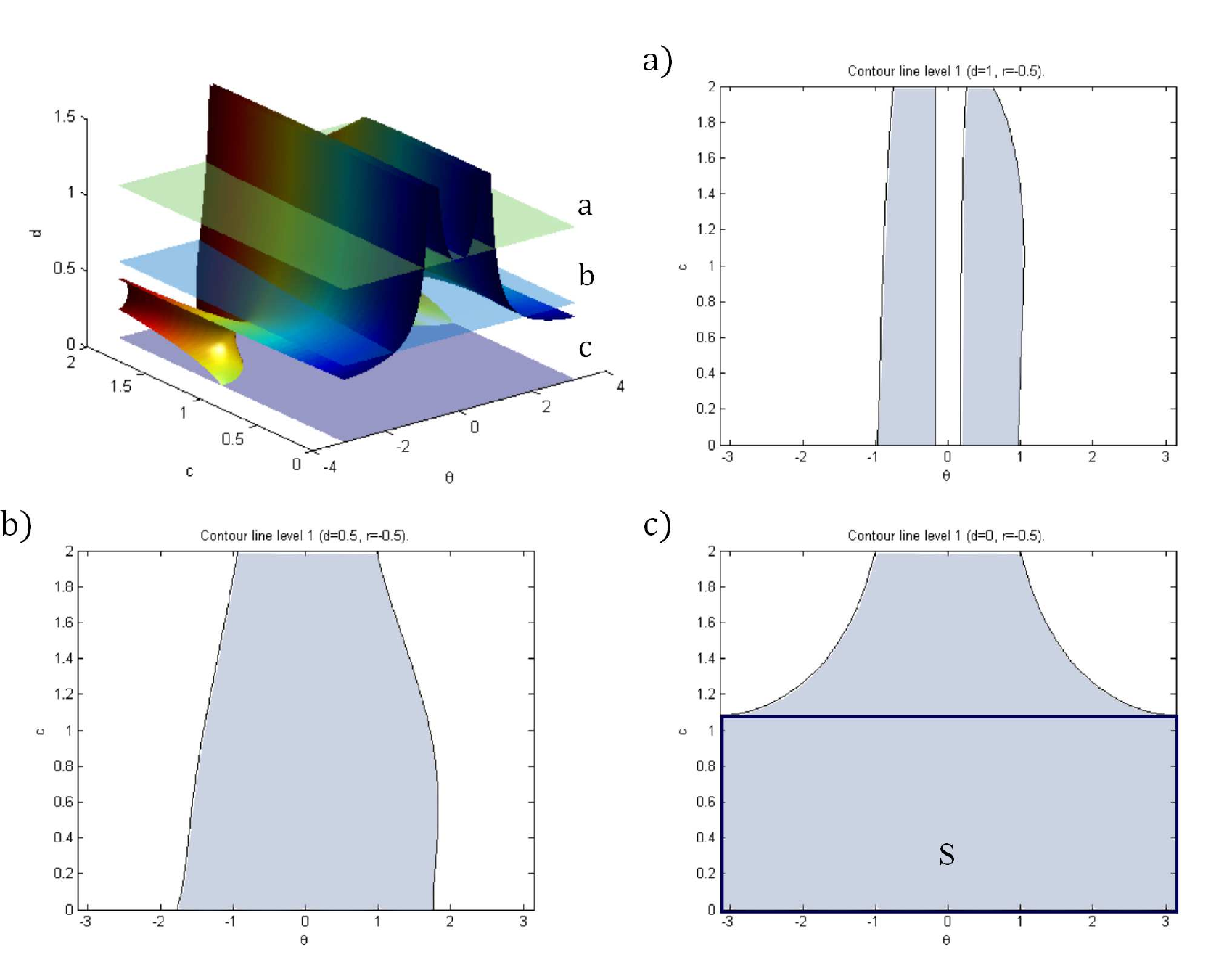}
	\caption{Stability region for the linear advection-diffusion-reaction equation with fixed reaction number $r=-0.5$. The isosurface of level one splits $\mathbb{R}^3$ into the stability region, containing the origin, and the unstable region. Subplots a), b), c) represent the contour plot of level one for the fixed values of {\color{black}$d=1$}, $d=0.5$ and {\color{black}$d=0$} respectively. The shaded regions correspond to the stability region. The blue rectangles identified as $S$ are the admissible regions of stability.}
	\label{fig:stability_c_r05_d_19b}
\end{figure}

\begin{figure}
	\centering
	\hspace*{-1cm}\includegraphics[width=1.15\linewidth]{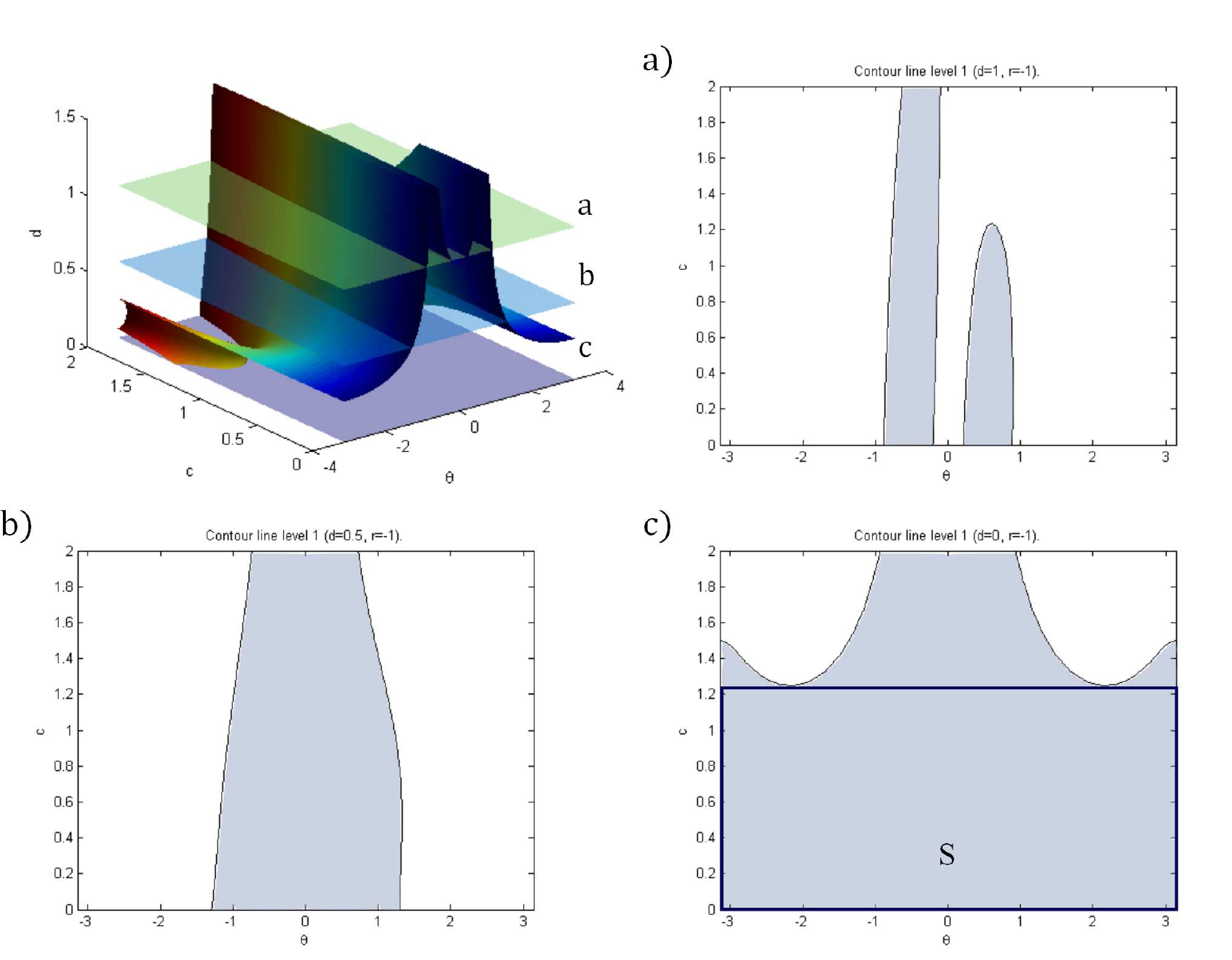}
	\caption{Stability region for the linear advection-diffusion-reaction equation with fixed reaction number $r=-1$. The isosurface of level one splits $\mathbb{R}^3$ into the stability region, containing the origin, and the unstable region. Subplots a), b), c) represent the contour plot of level one for the fixed values of {\color{black}$d=1$}, $d=0.5$ and {\color{black}$d=0$} respectively. The shaded regions correspond to the stability region. The blue rectangles identified as $S$ are the admissible regions of stability.}
	\label{fig:stability_c_r1_d_19b}
\end{figure}

{\color{black}  A new alternative way to depict the stability region is to plot the isosurface of level one of the function defined by
	\begin{equation}
		m_{\theta}(c,d,r)=\max_{\theta\in [-\pi,\pi]} \left\| A(\theta,c,d,r)\right\| .\label{eq:amplification_mtheta}
	\end{equation}
	Figure \ref{fig:stability_20202020}  confirms that the 4-orthotope defined above,  $\mathcal{O}_{1,\frac{1}{4},-\frac{1}{2}}$, is embedded in the stability region.}
\begin{figure}
	\centering
	\includegraphics[width=0.5\linewidth]{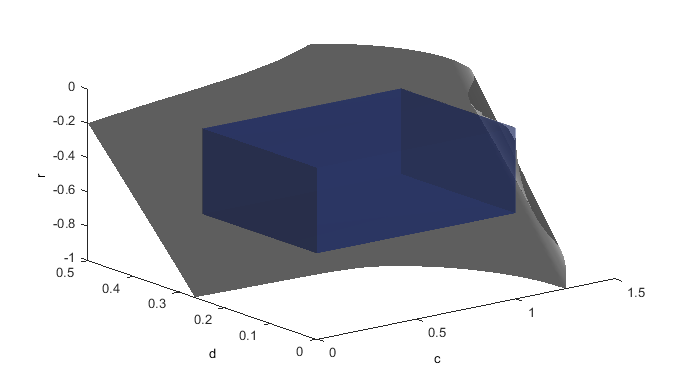}\hfill
	\includegraphics[width=0.5\linewidth]{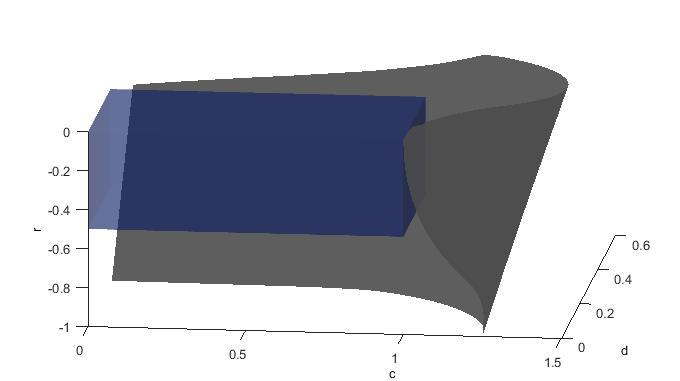}
	\caption{{\color{black}Two different views of the isosurface of level one of function $m_{\theta}$, \eqref{eq:amplification_mtheta}, (grey) and the 4-orthotope of stability $\mathcal{O}_{1,\frac{1}{4},-\frac{1}{2}}$ for the linear advection-diffusion-reaction equation (blue).}}
	\label{fig:stability_20202020}
\end{figure}

\section{Numerical results}\label{sec:numerical_results}
In this section, we present the results obtained for several test problems.
The error is analysed {\color{black}by} computing the norms
\begin{equation*}
	\textrm{Err}_{\mathcal{L}^1}=\left\|q-\hat{q} \right\|_{l^{\infty}\left( \mathcal{L}^1(\Omega)\right) },
	\, \textrm{Err}_{\mathcal{L}^2}=\left\|q-\hat{q} \right\|_{l^{\infty}\left( \mathcal{L}^2(\Omega)\right) },\,
	\textrm{Err}_{\mathcal{L}^\infty}=\left\|q-\hat{q} \right\|_{l^{\infty}\left( \mathcal{L}^\infty(\Omega)\right),}
\end{equation*}
where $\hat{q}$ denotes the numerical solution and $q$ is whether the exact solution or a reference solution computed for a refined mesh if the problem does not have an analytical solution.

\subsection{Test 1. Advection-reaction equation}
We consider two different tests for the advection-reaction equation.
{\color{black}  For both of them, Dirichlet boundary conditions are set. The exact solution is imposed at the boundary nodes and for the computation of the numerical flux {\color{black} at the first node} we use a forward approximation of the slope, namely,
	\begin{equation*} \Delta_1=\frac{q_2-q_1}{\Delta x}, \end{equation*}}
{\color{black} an analogous procedure is considered for the last node}.	

\subsubsection{Test 1.1.}
The first test problem studied is given by
\begin{eqnarray}
	\partial_t q (x,t) + \partial_x  q(x,t) = -q(x,t), \quad q(x,0)=\exp(-2x^2), \label{eq:test1}
\end{eqnarray}
with exact solution
\begin{equation*}
	q(x,t)=\exp(-2(x-\lambda t)^2+\beta t).
\end{equation*}

Seven meshes are considered. The time step is determined to guarantee that $c$ and $r$ belong to the rectangular cuboid \eqref{eq:rc_cdr} defined by $c_{M}=1,\, r_{m}=-1$. Since the time step condition imposed by $c_{M}$ is lower than the defined by  $r_{m}$, the values of $r$ are computed following \eqref{eq:r_relation_c}.

The obtained $r$, errors and order are depicted in Table \ref{tab:test_sol_exacta}. The {\color{black}attained}  second order {\color{black} was} theoretically {\color{black}expected}.
The results for the {\color{black}  mesh with 32 nodes} are depicted in Figure \ref{fig:test1_exp_t1}.

It is important to notice that neither of the chosen values for $c_{M}$ and $r_{m}$ are the optimal in the case of an advection-reaction equation. That is, within this test the exact solution is not expected to be obtained.

\begin{figure}
	\centering
	\includegraphics[width=\linewidth]{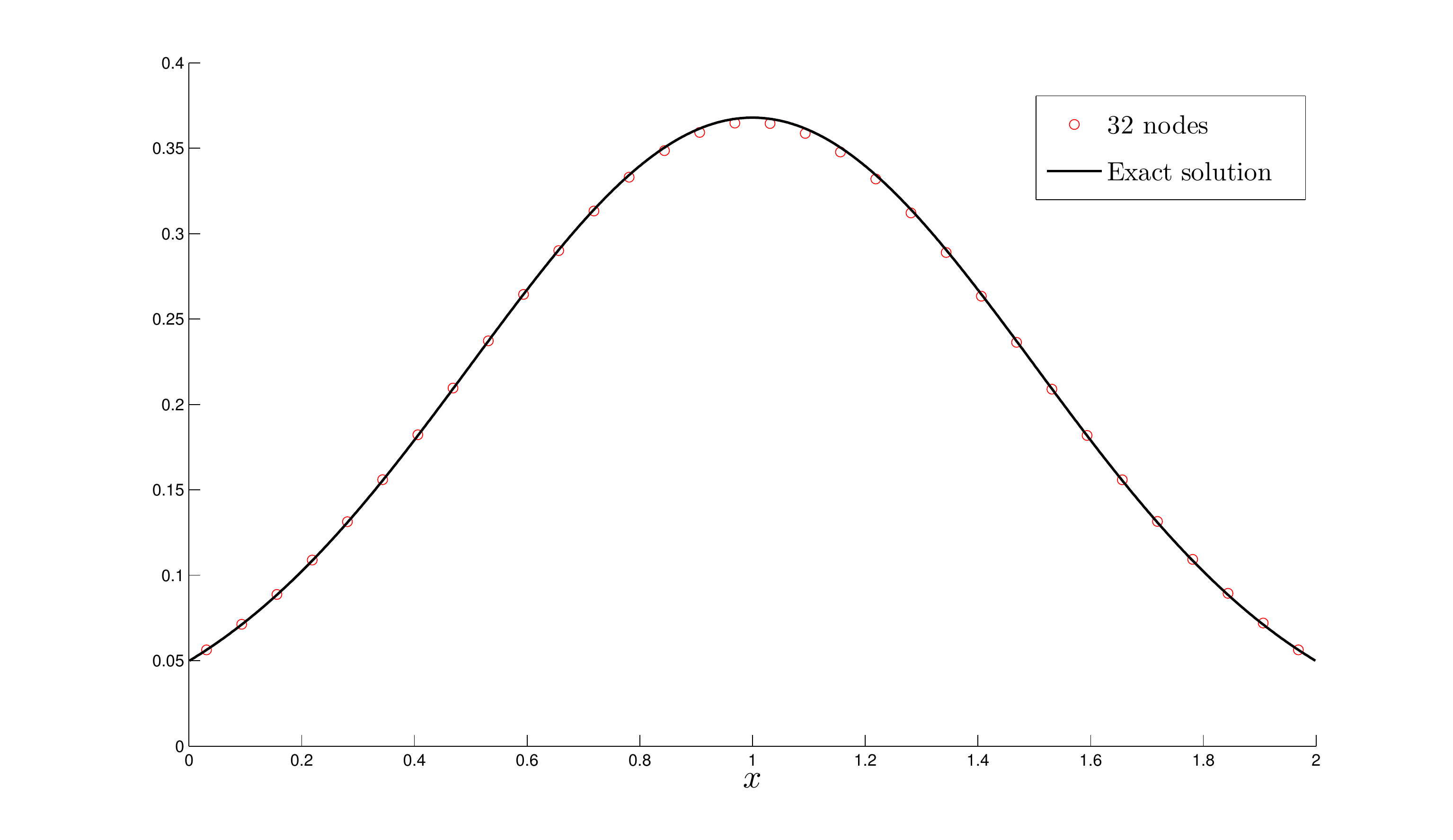}
	\caption{{\color{black} Test 1.1. Exact solution and numerical results obtained for the mesh with 32 nodes.
			$\Omega=[0,2]$, $t_{\textrm{end}}=1$, $c=\frac{\lambda \Delta t}{\Delta x}=c_{M}=1$.} }
	\label{fig:test1_exp_t1}
\end{figure}

\begin{table}
	\begin{tabular}{|c||c|c|c|c|c|c||c|}
		\hline Cells & $\textrm{Err}_{\mathcal{L}^1}$ & $\mathcal{O}_{\mathcal{L}^1}$ & $\textrm{Err}_{\mathcal{L}^2}$ & $\mathcal{O}_{\mathcal{L}^2}$  & $\textrm{Err}_{\mathcal{L}^{\infty}}$ & $\mathcal{O}_{\mathcal{L}^{\infty}}$  &  $r$\\ \hline
		\hline $8$ &$ 	2.15E-2	$ & $		$ & $	2.17E-02	$ & $		$ & $	2.95E-02	$ & $		$&  $ -\frac{1}{8}$\\
		\hline $16$& $	7.10E-3	$ & $	1.6	$ & $	6.97E-3	$ & $	1.64	$ & $	1.03E-2	$ & $	1.52	$&  $-\frac{1}{16} $\\
		\hline $32$&$	1.95E-3	$ & $	1.87	$ & $	1.86E-3	$ & $	1.91	$ & $	2.77E-3	$ & $	1.9	$ &  $-\frac{1}{32}$\\
		\hline $64$& $	5.02E-4	$ & $	1.96	$ & $	4.73E-4	$ & $	1.98	$ & $	7.00E-4	$ & $	1.99	$ &  $ -\frac{1}{64} $\\
		\hline $128$& $	1.27E-4	$ & $	1.99	$ & $	1.18E-4	$ & $	2.0	$ & $	1.74E-4	$ & $	2.01	$ &  $-\frac{1}{128} $\\
		\hline $256$& $	3.19E-5	$ & $	2.0	$ & $	2.96E-5	$ & $	2.0	$ & $	4.33E-5	$ & $	2.01	$ & $-\frac{1}{256}$\\
		\hline $512$& $	7.98E-6	$ & $	2.0	$ & $	7.40E-6	$ & $	2.0	$ & $	1.08E-5	$ & $	2.0	$ & $-\frac{1}{512}$\\
		\hline 
	\end{tabular}
	\caption{Test 1.1. Columns from second to seventh show the errors and convergence rates obtained. The last column depicts the values obtained for $r=\beta \Delta t=-\Delta t$. $\Omega=[0,2]$, $t_{\textrm{end}}=1$, $c=\frac{\lambda \Delta t}{\Delta x}=c_{M}=1$.}\label{tab:test_sol_exacta}
\end{table}

\subsubsection{Test 1.2.}
The second test analysed present a discontinuity {\color{black}in} the initial conditions:
\begin{eqnarray}
	\partial_t q (x,t) + \frac{1}{2}\partial_x  q(x,t) = -q(x,t), \notag \\
	q(x,0)=\left\lbrace
	\begin{array}{lr}
		1 & x\in\left[\frac{1}{8},\frac{1}{2}\right], \\
		0 & x\in \left[0,\frac{1}{8}\right)\cup \left(\frac{1}{2},\frac{3}{2}\right]. 
	\end{array}\right.\label{eq:grp_step}
\end{eqnarray}
Its exact solution reads
\begin{equation*}
	q(x,t)=\left\lbrace
	\begin{array}{lr}
		1 & x-\frac{1}{2}t\in\left[\frac{1}{8},\frac{1}{2}\right], \\
		0 & x-\frac{1}{2}t\in \left[0,\frac{1}{8}\right)\cup \left(\frac{1}{2},\frac{3}{2}\right].
	\end{array}\right.
\end{equation*}

In Figure \ref{fig:solucion_grp_step} we can observe that the {\color{black}loss} of monotonicity of the scheme produces oscillations near the discontinuity.
This problem arises from considering centred slopes, \eqref{eq:centred_slopes}, which provided a linear scheme. Indeed, we need to circumvent Godunov's theorem to obtain a monotone scheme. This can be done by including non-linear slopes.
In the {\color{black} existing literature}, the linear advection equation case was already studied  combining ADER schemes with ENO, WENO or WAF {\color{black} approaches} obtaining good results.

\begin{figure}
	\centering
	\includegraphics[width=\linewidth]{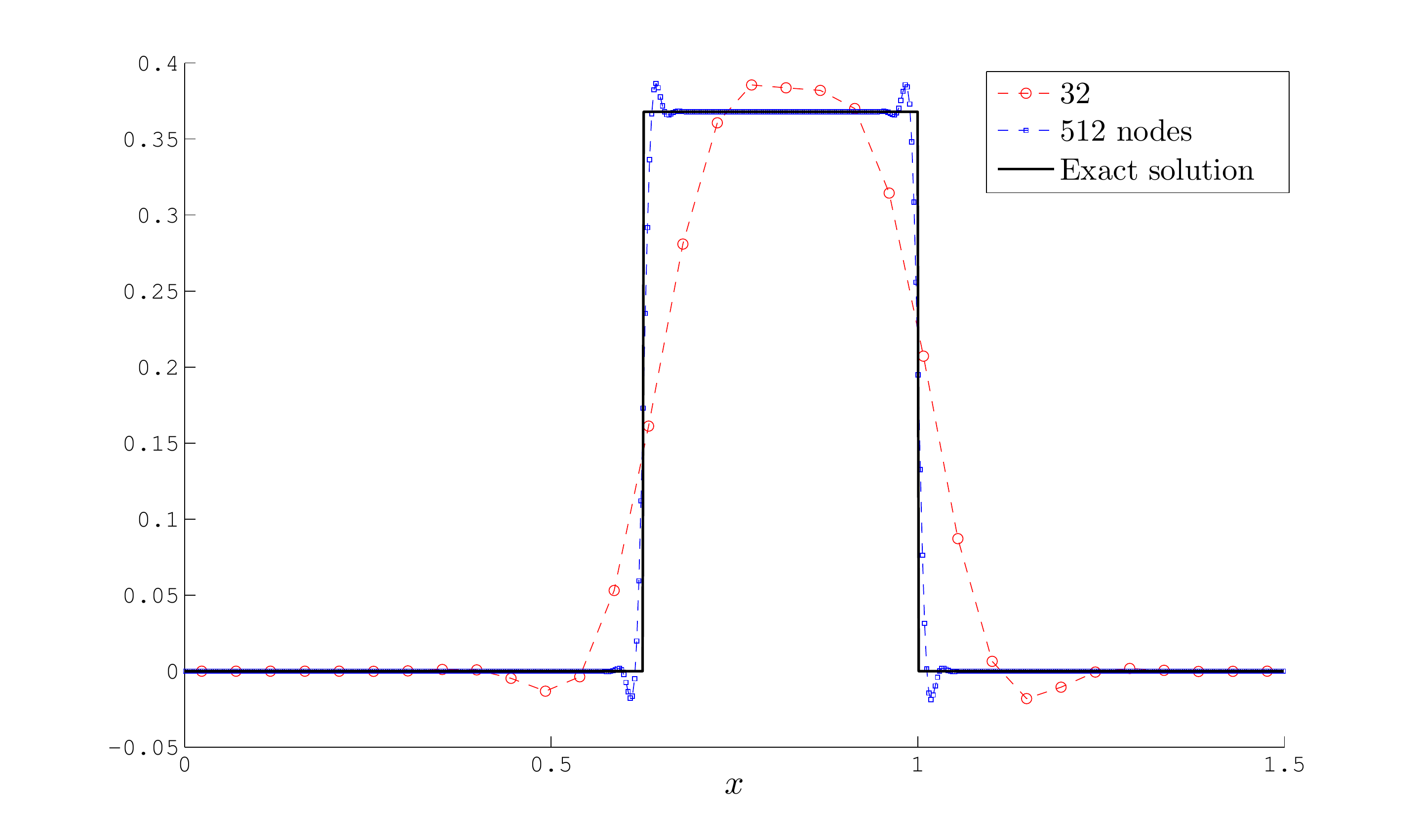}
	\caption{Test 1.2. Exact solution and numerical results obtained for the meshes with 32 and 512 nodes. $\Omega=[0,1.5]$, $t_{\textrm{end}}=1$, $c_{M}=0.5$, $r_{m}=-1$.}
	\label{fig:solucion_grp_step}
\end{figure}

\subsection{{\color{black} Test 2. Advection-diffusion-reaction equation}}
{\color{black} Next}, we consider two initial value problem{\color{black}s} for the advection-diffusion-reaction equation.

\subsubsection{Test 2.1.} 
Following \cite{TM14}, we set a problem with constant diffusion coefficient:
\begin{eqnarray*}
	\partial_t q (x,t) + 10  \partial_x  q(x,t) - 10^{-5} \partial_x^{(2)} q(x,t) = -5  q(x,t), \\
	q(x,0)=\sin(\pi x)
\end{eqnarray*}
in the computational domain  $\Omega\times T=[-1,1]\times [0,1]$ {\color{black} with Dirichlet boundary conditions}.
The exact solution reads
\begin{equation*}
	q(x,t)=\exp( (-\alpha \pi^2+\beta) t )  \sin( \pi (x-\lambda t) ).
\end{equation*}
{\color{black} The numerical results obtained are detailed in Table \ref{tab:test_completo_tm14}. 
As the magnitude of the solution is small, relative errors and orders of accuracy are also computed and depicted in Table \ref{tab:test_completorel_tm14} to facilitate the analysis of the results. The expected second order is attained. The lose of accuracy in infinity norm for the two finer meshes is due to the boundary condition approach.}


\begin{table}
	\begin{center}{\color{black}
		\begin{tabular}{|c||c|c|c|c|c|c|}
			\hline Cells & $\textrm{Err}_{\mathcal{L}^1}$ & $\mathcal{O}_{\mathcal{L}^1}$ & $\textrm{Err}_{\mathcal{L}^2}$ & $\mathcal{O}_{\mathcal{L}^2}$  & $\textrm{Err}_{\mathcal{L}^{\infty}}$ & $\mathcal{O}_{\mathcal{L}^{\infty}}$  \\ \hline
			\hline $8$ &  $ 2.76E-04	$ & $		$ & $	4.21E-04	$ & $		$ & $	7.76E-04	$ & $		$\\
			\hline $16$&  $ 1.32E-04 	$ & $	1.2673	$ & $	1.43E-04	$ & $	1.5478	$ & $	2.07E-04	$ & $	1.5336	$\\
			\hline $32$&  $ 4.11E-05  $ & $	1.7220	$ & $   4.01E-05	$ & $	 1.8914 	$ & $	6.59E-05	$ & $	1.9226	$\\
			\hline $64$&  $ 1.25E-05	$ & $	1.8921	$ & $	1.19E-05	$ & $	1.9551 	$ & $	2.48E-05	$ & $	1.9777	$\\
			\hline $128$& $	3.48E-06 	$ & $	1.9437	$ & $	3.24E-06 	$ & $	1.9709	$ & $	6.92E-06	$ & $	 1.9850	$\\
			\hline $256$& $	9.13E-07 	$ & $	1.9347	$ & $	8.39E-07 	$ & $	1.9501	$ & $	1.79E-06 	$ & $	 1.9686	$\\
			\hline $512$& $	2.67E-07	$ & $	1.6799	$ & $	2.38E-07	$ & $	1.7004	$ & $	4.48E-07	$ & $	 1.7376	$\\
			\hline
		\end{tabular}
		\caption{ Test 2.1. Absolute errors and convergence rates obtained. $\Omega=[-1,1]$, $t_{\textrm{end}}=1$, $c_{M}=0.1$, $d_{M}=0.25$, $r_{m}=-0.25$.}\label{tab:test_completo_tm14}   }
	\end{center}
\end{table}

\begin{table}
	\begin{center}{\color{black}
		\begin{tabular}{|c||c|c|c|c|c|c|}
			\hline Cells & $\textrm{Err}_{\textrm{rel}\mathcal{L}^1}$ & $\mathcal{O}_{\textrm{rel}\mathcal{L}^1}$ & $\textrm{Err}_{\textrm{rel}\mathcal{L}^2}$ & $\mathcal{O}_{\textrm{rel}\mathcal{L}^2}$  & $\textrm{Err}_{\textrm{rel}\mathcal{L}^{\infty}}$ & $\mathcal{O}_{\textrm{rel}\mathcal{L}^{\infty}}$  \\ \hline
			\hline $8$ &  $ 5.1E-03$ & $	$ & $	1.0191e-02	$ & $		$ & $2.0383e-02$ & $		$\\
			\hline $16$&  $ 1.34E-03$ & $	2.4668$ & $3.78E-03$ & $1.9668$ & $1.07E-02$ & $1.4668$\\
			\hline $32$&  $ 9.82E-05$ & $	3.2150$ & $3.93E-04$ & $2.7150$ & $1.57E-03$ & $2.2150$\\
			\hline $64$&  $ 6.22E-06$ & $	3.0442$ & $3.52E-05$ & $2.5442$ & $1.99E-04$ & $2.0442$\\
			\hline $128$& $	5.25E-07$ & $	2.9952$ & $4.2E-06$ & $2.4952$ & $3.36E-05$ & $1.9952$\\
			\hline $256$& $	1.02E-07$ & $	2.9426$ & $1.16E-06$ & $2.4426$ & $1.31E-05$ & $1.9426$\\
			\hline $512$& $	4.05E-08$ & $	2.5185$ & $6.48E-07$ & $2.0185$ & $1.03E-05$ & $1.5185$\\
			\hline
		\end{tabular}
		\caption{ Test 2.1. Relative errors and convergence rates obtained. $\Omega=[-1,1]$, $t_{\textrm{end}}=1$, $c_{M}=0.1$, $d_{M}=0.25$, $r_{m}=-0.25$.}\label{tab:test_completorel_tm14}   }
	\end{center}
\end{table}

\subsubsection{Test 2.2.}
We consider the computational domain $\Omega\times T = [0,2\pi]\times [0,1]$ and the initial value problem with a time and space {\color{black} dependent} diffusion coefficient given by
\begin{eqnarray*}
	\partial_t q (x,t) + 10\partial_x  q(x,t) -10^{-5}\partial_x  \left[\exp(x(t-1)^2) \partial_x q(x,t)\right]  = -5q(x,t), \\
	q(x,0)=\exp(\sin^2(x)),\label{eq:adre_test}
\end{eqnarray*}
{\color{black} with periodic boundary conditions}.
The exact solution for this problem is unknown.
Therefore, in order to obtain the error and the order of accuracy, we compare the obtained solutions with a  reference solution computed for a finer mesh (512 cells).

The obtained results, confirming second order of accuracy, are depicted in Table~\ref{tab:test_completo31} and Figure~\ref{fig:ladre_512}.
\begin{table}
	\begin{center}
		\begin{tabular}{|c||c|c|c|c|c|c|}
			\hline Cells & $\textrm{Err}_{\mathcal{L}^1}$ & $\mathcal{O}_{\mathcal{L}^1}$ & $\textrm{Err}_{\mathcal{L}^2}$ & $\mathcal{O}_{\mathcal{L}^2}$  & $\textrm{Err}_{\mathcal{L}^{\infty}}$ & $\mathcal{O}_{\mathcal{L}^{\infty}}$  \\ \hline
			\hline $8$ &$ 	1.39	$ & $		$ & $	7.18E-1	$ & $		$ & $	6.90E-1	$ & $ $ \\
			\hline $16$& $	3.48E-1	$ & $	2.00	$ & $	2.00E-1	$ & $	1.84	$ & $	2.16E-1	$ & $	1.68	$\\
			\hline $32$&$	6.98E-2	$ & $	2.32	$ & $	4.26E-2	$ & $	2.24	$ & $	4.81E-2	$ & $	2.16	$\\
			\hline $64$& $	1.41E-2	$ & $	2.31	$ & $	8.45E-3	$ & $	2.33	$ & $	9.55E-3	$ & $	2.33	$\\
			\hline $128$& $	2.95E-3	$ & $	2.26	$ & $	1.73E-3	$ & $	2.29	$ & $	1.91E-3	$ & $	2.32	$\\
			\hline $256$& $	5.51E-4	$ & $	2.42	$ & $	3.18E-4	$ & $	2.44	$ & $	3.46E-4	$ & $	2.47	$\\
			\hline 
		\end{tabular}
		\caption{{\color{black} Test 2.2.} Errors and convergence rates obtained. $\Omega=[0,2\pi]$, $t_{\textrm{end}}=1$, $c_{M}=0.5$, $d_{M}=0.25$, $r_{m}=-0.5$.}\label{tab:test_completo31}
	\end{center}
\end{table}
\begin{table}
	\begin{center}
		\begin{tabular}{|c||c|c|c|}
			\hline Cells &  $c$ & $d$ & $r$ \\ \hline
			\hline $8$ & $0.5 $ & $6.37E-7 $ & $-1.96E-1 $\\
			\hline $16$&  $0.5 $ & $1.27E-6 $ & $-9.82E-2 $\\
			\hline $32$& $0.5 $ & $2.55E-6 $ & $-4.92E-2 $\\
			\hline $64$&  $0.5 $ & $5.09E-6 $ & $-2.45E-2 $\\
			\hline $128$& $0.5 $ & $1.02E-5 $ & $-1.23E-2 $\\
			\hline $256$&  $0.5 $ & $2.04E-5 $ & $-6.14E-3 $\\
			\hline 
		\end{tabular}
		\caption{{\color{black} Test 2.2.} Values of the parameters $c$, $d$ and $r$ for the computed time step. $\Omega=[0,2\pi]$, $t_{\textrm{end}}=1$, $c_{M}=0.5$, $d_{M}=0.25$, $r_{m}=-0.5$.}\label{tab:test_completo31_param}
	\end{center}
\end{table}
\begin{figure}
	\centering
	\includegraphics[width=\linewidth]{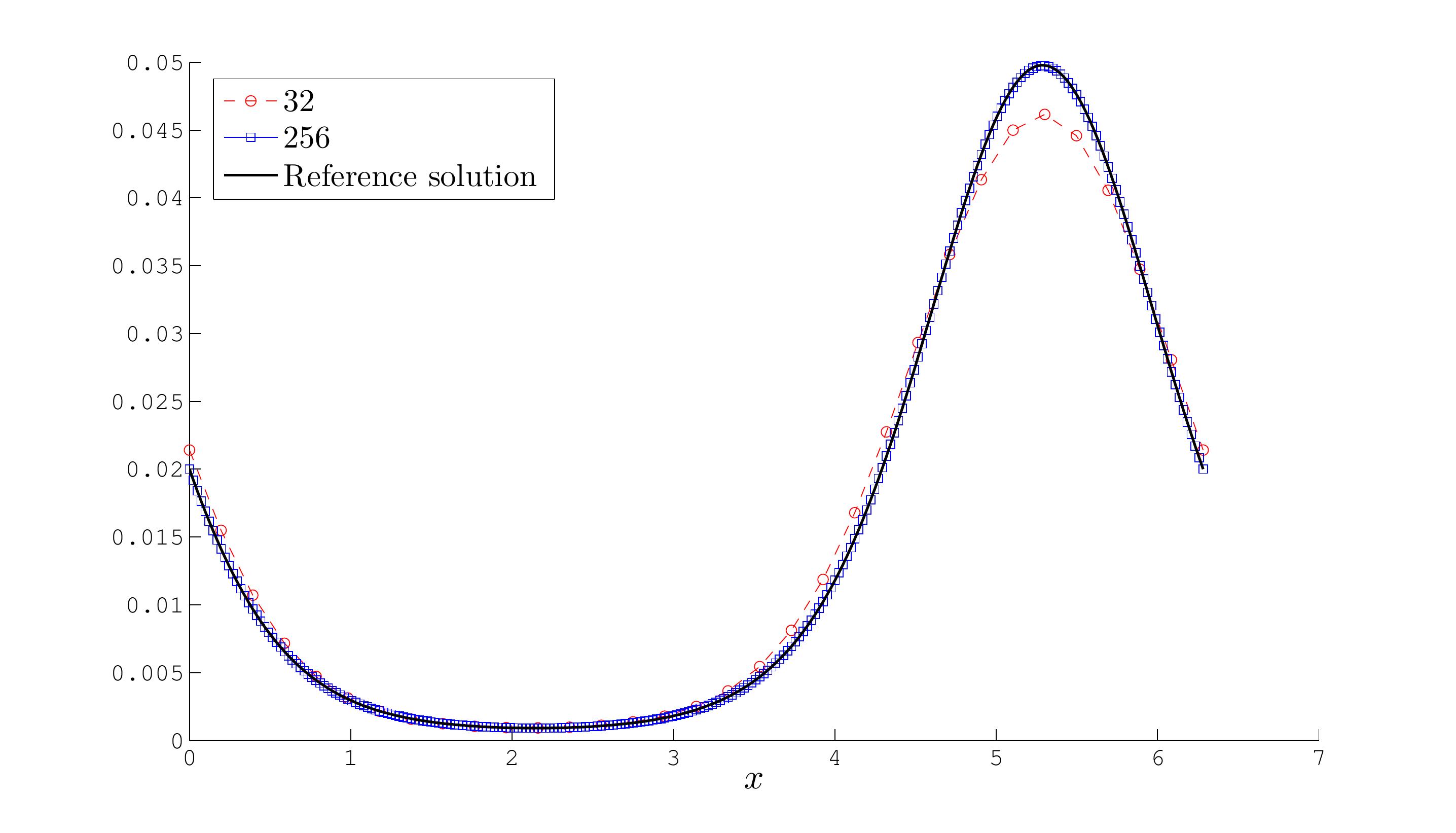}
	\caption{{\color{black}  Test 2.2.} Exact solution and numerical results obtained for the meshes with 32 and 512 nodes. $\Omega=[0,2\pi]$, $t_{\textrm{end}}=1$, $c_{M}=0.5$, $d_{M}=0.25$, $r_{m}=-0.5$.}
	\label{fig:ladre_512}
\end{figure}

\subsection{{\color{black} Test 3. Diffusion equation}}
{\color{black} As a final example,} we consider the non-linear diffusion problem proposed in \cite{TH09}:
\begin{eqnarray}
	\partial_t q (x,t) =  \partial_x \left[\left(  q(x,t)\right)^{-1}   \partial_x q(x,t)\right]  , \notag \\
	q(x,0)=\frac{\sinh(2)}{\cosh(2)-\sin\left( \sqrt{2}\left(x-1\right)\right) }
	\label{eq:diffusion_th}
\end{eqnarray}
with periodic boundary conditions in the computational domain $\Omega=[-\sqrt{2}\pi,\sqrt{2}\pi]$. 
Its exact solution reads
\begin{equation*}
	q(x,t)=\frac{\sinh(2t+2)}{\cosh(2t+2)-\sin\left( \sqrt{2}\left(x-1\right)\right)}.
\end{equation*}
The numerical results presented in Table \ref{tab:diffusion_th} confirm second order of accuracy. Figure \ref{fig:diffusion_th} shows the good agreement between the exact solution and the {\color{black} computed solution} for two different meshes (32 and 512 nodes).

\begin{figure}
	\centering
	\includegraphics[width=\linewidth]{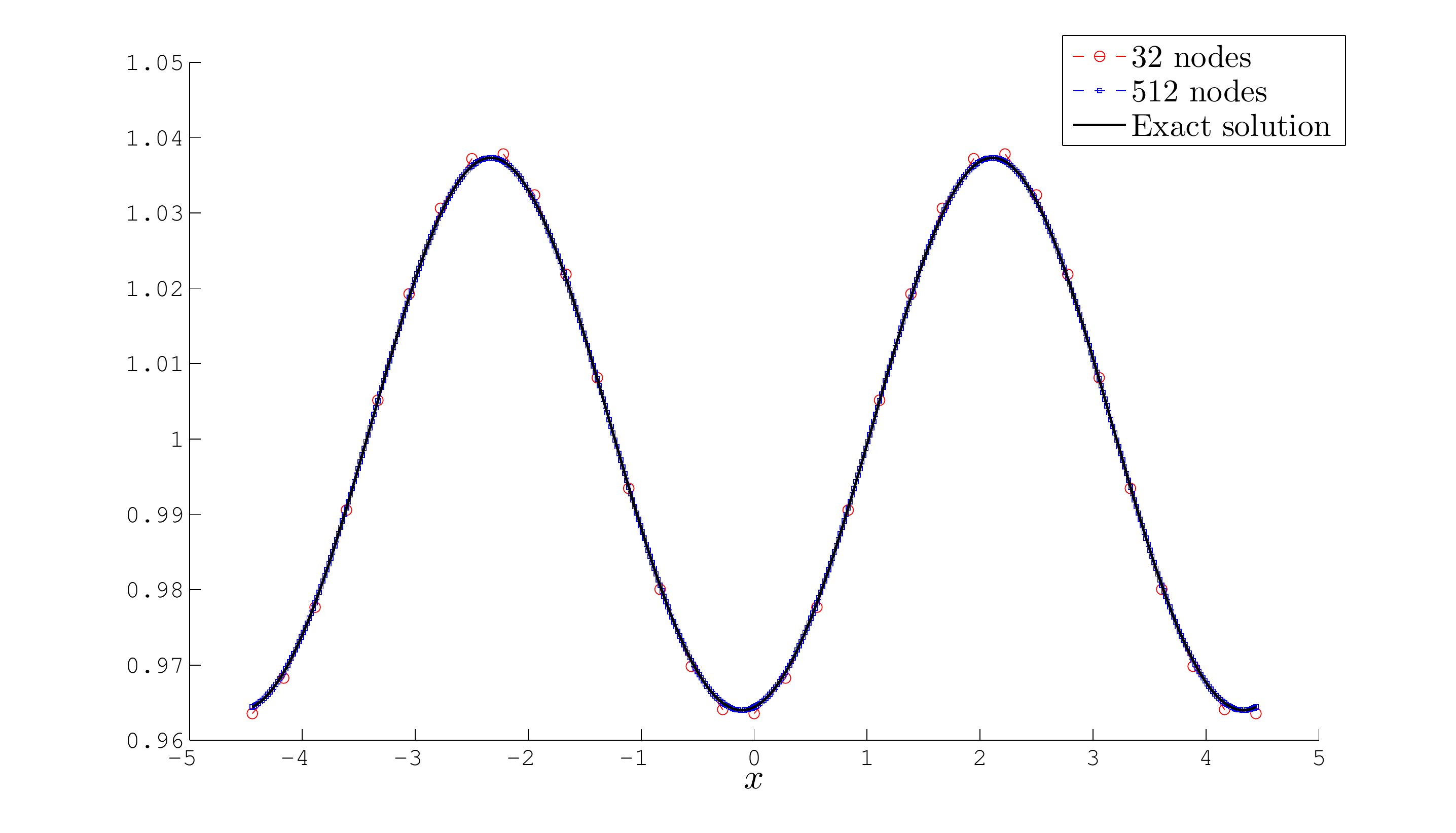}
	\caption{{\color{black} Test 3. } Exact solution and numerical results obtained for the meshes with 32 and 512 nodes. $\Omega=[-\sqrt{2}\pi,\sqrt{2}\pi]$, $t_{\textrm{end}}=1$, $d_{M}=0.25$.}
	\label{fig:diffusion_th}
\end{figure}
\begin{table}
	\begin{tabular}{|c||c|c|c|c|c|c|}
		\hline Cells & $\textrm{Err}_{\mathcal{L}^1}$ & $\mathcal{O}_{\mathcal{L}^1}$ & $\textrm{Err}_{\mathcal{L}^2}$ & $\mathcal{O}_{\mathcal{L}^2}$  & $\textrm{Err}_{\mathcal{L}^{\infty}}$ & $\mathcal{O}_{\mathcal{L}^{\infty}}$  \\ \hline
		\hline $8$ &$ 	1.71E-01	$ & $		$ & $	6.32E-02	$ & $		$ & $	3.57E-02	$ & $		$\\
		\hline $16$& $	3.29E-02	$ & $	2.3751	$ & $	1.19E-02	$ & $	2.4118	$ & $	7.56E-03	$ & $	2.2391	$\\
		\hline $32$&$	7.53E-03	$ & $	2.1268	$ & $	2.79E-03	$ & $	2.0897	$ & $	1.74E-03	$ & $	2.1176	$\\
		\hline $64$& $	1.83E-03	$ & $	2.0413	$ & $	6.83E-04	$ & $	2.0296	$ & $	4.39E-04	$ & $	1.989	$\\
		\hline $128$& $	4.52E-04	$ & $	2.0179	$ & $	1.70E-04	$ & $	2.0103	$ & $	1.09E-04	$ & $	2.0082	$\\
		\hline $256$& $	1.12E-04	$ & $	2.0075	$ & $	4.23E-05	$ & $	2.004	$ & $	2.73E-05	$ & $	1.9993	$\\
		\hline $512$& $	2.80E-05	$ & $ 2.0035	$ & $	1.06E-05	$ & $	 2.0017	$ & $	6.82E-06	$ & $	2.0005	$\\
		\hline 
	\end{tabular}
	\caption{{\color{black} Test 3. } Errors and convergence rates obtained. $\Omega=[-\sqrt{2}\pi,\sqrt{2}\pi]$, $t_{\textrm{end}}=1$, $d_{M}=0.25$.}\label{tab:diffusion_th}
\end{table}

\section{Summary and conclusions}\label{sec:conclusions}
In this paper we have constructed numerical schemes of second order of accuracy in both space and time,
for solving advection-diffusion-reaction partial differential equations. To this end we have adopted the
ADER and the MUSCL-Hancock approaches. Second order of accuracy is ensured by approximating
appropriately the integrals that arise in the finite volume framework. For the model equation we have
performed a detailed linear stability as well as an accuracy analysis in terms of local truncation error.
Empirical convergence rate studies confirm the expected theoretical accuracy analysis. The numerical
schemes studied will prove useful in solving systems of time-dependent advection-diffusion-reaction
equations for realist applications.

\section*{Acknoledgements}
This work was financially supported by  Spanish MICINN 
projects MTM2008-02483, CGL2011-28499-C03-01 and MTM2013-43745-R; by the Spanish MECD under grant FPU13/00279;
by the Xunta de Galicia Conseller\'ia de Cultura Educaci\'on
e Ordenaci\'on Universitaria under grant \textit{Axudas de apoio
\'a etapa predoutoral do Plan I2C} PRE/2013/031; by Xunta de Galicia and FEDER under research project GRC2013-014 and by Fundaci\'on Barri\'e under grant \textit{Becas de posgrado en el extranjero 2013}.



\bibliographystyle{plain}
\bibliography{./mibiblio}

\appendix
\section{Truncation error}\label{sec:appendix}
In this appendix, the order of accuracy of the attained schemes is analysed. Two distinct cases are studied: the linear ad\-vec\-tion-reaction equation and the ad\-vec\-tion-dif\-fu\-sion-reac\-tion equation with time and space dependent diffusion coefficient.

\subsection{Truncation error of the linear advection-reaction equation}\label{apendixa1}
Using Taylor series expansion, second order in space and time can be proved for scheme \eqref{eq:lare_num_scheme} with centred slopes: 
\begin{align*}
	\tau_{j}^{n} &= & &\frac{1}{\Delta t} \left[ q(x_j,t^{n+1}) - q(x_{j},t^{n}) \right] 
	+\frac{\lambda}{\Delta x}
	\left\lbrace \phantom{\frac{t}{b}}\!\!\!  \left[ q(x_{j},t^{n}) - q(x_{j-1},t^{n}) \right]\right. \\
	&& +&\left.  \frac{1}{4} \left[  q(x_{j+1},t^{n})- q(x_{j},t^{n})- q(x_{j-1},t^{n})+ q(x_{j-2},t^{n})\right] \right. \\
	& & +&\left. \frac{\Delta t}{2} \left[-\frac{\lambda}{2\Delta x} \left[  q(x_{j+1},t^{n})- q(x_{j},t^{n})- q(x_{j-1},t^{n})+ q(x_{j-2},t^{n}) \right] \right.\right.\\
	& &+&\left. \left. \frac{\beta}{\Delta x} \left(\phantom{\frac{t}{b}}\!\!\! \left[ q(x_{j},t^{n})-q(x_{j-1},t^{n})\right]  \right. \right.\right.\\
	&   &+&\left. \left.\left.\frac{ 1}{4} \left[ q(x_{j+1},t^{n})- q(x_{j},t^{n})- q(x_{j-1},t^{n})+ q(x_{j-2},t^{n})\right] \right) \right] 
	\right\rbrace \\
	& & - &\frac{\beta}{\Delta x} \left[q(x_{j},t^{n})+\frac{\Delta t}{2}\left(-\frac{\lambda}{2\Delta x} \left(q(x_{j+1},t^{n})-q(x_{j-1},t^{n}) \right) +\beta q(x_{j},t^{n})\right) \right]\\
	&=  &  &\partial_t q(x_j,t^n) +\lambda \partial_x q(x_j,t^n) - \beta q(x_j,t^n)   \\
	& & + &\lambda\left[ -\frac{1}{2} \partial^{(2)}_x q(x_j,t^n) \Delta x + \frac{1}{6} \partial^{(3)}_x q(x_j,t^n) \Delta x^2 \right. \\
	& & +&\left.\frac{1}{4}\left[  2\partial^{(2)}_x q(x_j,t^n)\Delta x - \partial^{(3)}_x q(x_j,t^n)\Delta x^2   \right] \right]
	\\
	& & +& \Delta t\left\lbrace \frac{1}{2} \partial^{(2)}_t q(x_j,t^n)
	+  \frac{\lambda}{2} \left[-\frac{\lambda}{2} \left[   2\partial^{(2)}_x q(x_j,t^n) - \partial^{(3)}_x q(x_j,t^n)\Delta x \right]  \right. \right.\\
	& & +&\beta \left( \partial_x q(x_j,t^n) -\frac{1}{2} \partial^{(2)}_x q(x_j,t^n) \Delta x + \frac{1}{6} \partial^{(3)}_x q(x_j,t^n) \Delta x^2  \right.\\
	& &  +&\left. \left. \frac{1}{4} \left[ 2\partial^{(2)}_x q(x_j,t^n)\Delta x - \partial^{(3)}_x q(x_j,t^n)\Delta x^2 \right]   \right) \right]  
	\\
	& & -&\left. \frac{\beta}{2}\left[-\frac{\lambda}{2\Delta x}\left( 2\partial_x q(x_j,t^n)\Delta x + \frac{1}{3}\partial^{(3)}_x q(x_j,t^n)\Delta x^3 \right) +\beta q(x_j,t^n)\right] \right\rbrace 
	\\
	& & +&\mathcal{O}\left(\Delta t^2 \right) +\mathcal{O}\left(\Delta x^2 \right) +\mathcal{O}\left(\Delta x \Delta t \right) 
	\\
	& =  & & \frac{\Delta t}{2} \left[ \partial^{(2)}_t q(x_j,t^n)
	-\lambda^2  \partial^{(2)}_x q(x_j,t^n) +2\lambda \beta  \partial_x q(x_j,t^n) -\beta^2 q(x_j,t^n)\right]\\
	& & +&\mathcal{O}\left(\Delta t^2 \right) +\mathcal{O}\left(\Delta x^2 \right)+\mathcal{O}(\Delta x \Delta t )
	\\
	& = & & \mathcal{O}\left(\Delta t^2 \right) +\mathcal{O}\left(\Delta x^2 \right)+\mathcal{O}(\Delta x \Delta t ).
\end{align*}
The last equality arises from
\[ \partial^{(2)}_t q(x_j,t^n)
-\lambda^2  \partial^{(2)}_x q(x_j,t^n) +\lambda \beta  \partial_x q(x_j,t^n)=  -\lambda \beta  \partial_x q(x_j,t^n) -\beta^2 q(x_j,t^n)  \]
which is obtained following the Cauchy-Kovalevskaya procedure.

\subsection{Truncation error of the advection-diffusion-reaction equation}
The truncation error of the scheme for the  advection-diffusion-reaction equation with time and space dependent diffusion coefficient is given by
\begin{align*}
	\tau_{j}^{n} &= & &\frac{1}{\Delta t} \left[ q(x_j,t^{n+1}) - q(x_{j},t^{n}) \right] 
	+\frac{\lambda}{\Delta x}
	\left\lbrace \phantom{\frac{t}{b}}\!\!\!\left[  q(x_{j},t^{n}) - q(x_{j-1},t^{n}) \right] \right. \\
	&& +&\left.  \frac{1}{4} \left[   q(x_{j+1},t^{n})- q(x_{j},t^{n})- q(x_{j-1},t^{n})+ q(x_{j-2},t^{n})\right]  \right. \\
	& & +&\left. \frac{\Delta t}{2} \left[-\frac{\lambda}{2\Delta x} \left[  q(x_{j+1},t^{n})- q(x_{j},t^{n})- q(x_{j-1},t^{n})+ q(x_{j-2},t^{n}) \right]  \right.\right.\\
	& & +&  \frac{1}{\Delta x} \left[ \left(
	\alpha(x_{j+\frac{1}{2}},t^{n})\frac{q(x_{j+1},t^{n})-q(x_{j},t^{n}) }{\Delta x} - \alpha(x_{j-\frac{1}{2}},t^{n})\frac{q(x_{j},t^{n})-q(x_{j-1},t^{n}) }{\Delta x} \right) \right.  \\[6pt] & & - &\left. \left( 
	\alpha(x_{j-\frac{1}{2}},t^{n})\frac{q(x_{j},t^{n})-q(x_{j-1},t^{n}) }{\Delta x} - \alpha(x_{j-\frac{3}{2}},t^{n})\frac{q(x_{j-1},t^{n})-q(x_{j-2},t^{n}) }{\Delta x} \right)
	\right]\\[6pt]
	& &+&\left. \left. \frac{\beta}{\Delta x} \left(\phantom{\frac{t}{b}}\!\!\!  \left[ q(x_{j},t^{n})-q(x_{j-1},t^{n})\right]+\frac{ 1}{4} \left[  q(x_{j+1},t^{n})- q(x_{j},t^{n})  \right. \right. \right.\right.\\
	&   &-&\left. \left.\left.\left. q(x_{j-1},t^{n})+ q(x_{j-2},t^{n})\right] \phantom{\frac{b}{b}}\!\!\! \right) \right] 
	\right\rbrace 
	-\frac{1}{\Delta x^2} \left\lbrace 
	\overline{\alpha}(x_{j+\frac{1}{2}},t^n) \left[
	\phantom{\frac{b}{b}}\!\!\!
	q(x_{j+1},t^n)-q(x_{j},t^{n})\right.\right.
	\\& & &\left. \left. +\frac{\Delta t}{2}\left(
	\phantom{\frac{b}{b}}\!\!\!\!
	-\lambda \left[ q(x_{j+2},t^{n})
	- 2q(x_{j+1},t^{n})+q(x_{j},t^{n})\right] 
	+\frac{1}{\Delta x^2}\alpha(x_{j+\frac{3}{2}},t^n)
	\right. \right.\right.\\ & & & 
	\left. \left. \left.  \left[  q(x_{j+2},t^{n})-
	q(x_{j+1},t^{n})\right] 
	-\frac{1}{\Delta x^2}\alpha(x_{j+\frac{1}{2}},t^n)\left[  2q(x_{j+1},t^{n})-2q(x_{j},t^{n})\right] 
	\right.\right.\right.\\ & & +& \left.\left.\left.
	\frac{1}{\Delta x^2}\alpha(x_{j-\frac{1}{2}},t^n)\left[  q(x_{j},t^{n})-q(x_{j-1},t^{n})\right] 
	+ \beta\left[ q(x_{j+1},t^{n})-q(x_{j},t^{n})\right]  \phantom{\frac{b}{b}}\!\!\!\right) 
	\right] 
	\right.\\ & & +& \left.
	\overline{\alpha}(x_{j-\frac{1}{2}},t^n) \left[
	\phantom{\frac{b}{b}}\!\!\!
	q(x_{j-1},t^{n})-q(x_{j},t^{n})
	+ \frac{\Delta t}{2}\left(\phantom{\frac{b}{b}}\!\!\!
	-\lambda \left[ -q(x_{j+1},t^{n})+2q(x_{j},t^{n})  \right.\right. \right.\right.\\ & & -&
	\left. \left. \left. \left.q(x_{j-1},t^{n})\right]-
	\frac{1}{\Delta x^2}\alpha(x_{j+\frac{1}{2}},t^n)\left[  q(x_{j+1},t^{n})-q(x_{j},t^{n})\right] 
	\right. \right.\right.\\  & & +&\left. \left. \left. 
	\frac{1}{\Delta x^2}\alpha(x_{j-\frac{1}{2}},t^n)\left[  2q(x_{j},t^{n})-2q(x_{j-1},t^{n})\right] 
	-\frac{1}{\Delta x^2}\alpha(x_{j-\frac{3}{2}},t^n)\left[  q(x_{j-1},t^{n})\right.
	\right.\right.\right. \\ & & -&\left.\left.\left.
	\left.q(x_{j-2},t^{n})\right] 
	+\beta\left[ q(x_{j-1},t^{n})-q(x_{j},t^{n})\right] \phantom{\frac{b}{b}}\!\!\!\right) 
	\right] 
	\right\rbrace\\[6pt]
	& & - &\frac{\beta}{\Delta x} \left\lbrace q(x_{j},t^{n})+\frac{\Delta t}{2}\left[ -\frac{\lambda}{2\Delta x} \left[ q(x_{j+1},t^{n})-q(x_{j-1},t^{n})\right]  \right.\right. \\& &+&\left.\left.\frac{1}{\Delta x^2} \left(\alpha(x_{j+\frac{1}{2}},t^n)\left[ q(x_{j+1},t^n) -q(x_{j},t^n)\right]  - \alpha(x_{j-\frac{1}{2}},t^n) \left[ q(x_{j},t^n)-q(x_{ j-1},t^n) \right] \right) \right.\right. \\& &+&\left.\left. \frac{\beta}{2\Delta x} q(x_{j},t^{n})\right] \right\rbrace
\end{align*}
where
\begin{equation*}
	\overline{\alpha}(x_{j+\frac{1}{2}},t^n)=\alpha(x_{j+\frac{1}{2}},t^n) + \frac{\Delta t}{2}\partial_t \alpha(x_{j+\frac{1}{2}},t^n),
\end{equation*}
\begin{equation*}
	\overline{\alpha}(x_{j-\frac{1}{2}},t^n)=\alpha(x_{j-\frac{1}{2}},t^n) + \frac{\Delta t}{2}\partial_t \alpha(x_{j-\frac{1}{2}},t^n).
\end{equation*}
Then, we can proceed analysing {\color{black} each of the} terms which depend on the diffusion term: 
\begin{itemize}
	\item Local truncation error contribution of the diffusion term to the flux term:
\end{itemize}
\begin{align*}
	& & & \frac{\lambda \Delta t}{2 \Delta x^2} \left\lbrace\left[ 
	\alpha(x_{j+\frac{1}{2}},t^{n})\frac{q(x_{j+1},t^{n})-q(x_{j},t^{n}) }{\Delta x} - \alpha(x_{j-\frac{1}{2}},t^{n})\frac{q(x_{j},t^{n})-q(x_{j-1},t^{n}) }{\Delta x} \right] \right. \\& & -&\left.\left[ 
	\alpha(x_{j-\frac{1}{2}},t^{n})\frac{q(x_{j},t^{n})-q(x_{j-1},t^{n}) }{\Delta x} - \alpha(x_{j-\frac{3}{2}},t^{n})\frac{q(x_{j-1},t^{n})-q(x_{j-2},t^{n}) }{\Delta x} \right]
	\right\rbrace\\
	& = & & \frac{\lambda \Delta t}{2 \Delta x^3} \left\lbrace
	\left[ 
	\alpha(x_{j+\frac{1}{2}},t^{n})\left(   q(x_{j},t^{n})+\partial_x q(x_{j},t^{n}) \Delta x +\frac{1}{2}\partial^{(2)} q(x_{j},t^{n}) \Delta x^2\right.\right.\right.\\ & & + &\left. \left.\left.  
	\frac{1}{6}\partial^{(3)} q(x_{j},t^{n}) \Delta x^3+\mathcal{O}(\Delta x^4)-q(x_{j},t^{n}) \right)
	- \alpha(x_{j-\frac{1}{2}},t^{n})\left( \phantom{\frac{b}{b}}\!\!\! q(x_{j},t^{n})-  q(x_{j},t^{n}) \right.\right.\right.\\ & & +&\left.\left. \left. \partial_x q(x_{j},t^{n}) \Delta x -\frac{1}{2}\partial^{(2)} q(x_{j},t^{n}) \Delta x^2 + \frac{1}{6}\partial^{(3)} q(x_{j},t^{n}) \Delta x^3+\mathcal{O}(\Delta x^4) )\right)  \right] \right. \\ & & -&\left. \left[ 
	\alpha(x_{j-\frac{1}{2}},t^{n})\left( q(x_{j},t^{n})-  q(x_{j},t^{n})+\partial_x q(x_{j},t^{n}) \Delta x -\frac{1}{2}\partial^{(2)} q(x_{j},t^{n}) \Delta x^2 
	\right.\right.\right.\\ & & + &\left.\left.\left.
	\frac{1}{6}\partial^{(3)} q(x_{j},t^{n}) \Delta x^3+\mathcal{O}(\Delta x^4)\right)  - \alpha(x_{j-\frac{3}{2}},t^{n})\left( \phantom{\frac{b}{b}}\!\!\!  q(x_{j},t^{n})-\partial_x q(x_{j},t^{n}) \Delta x
	\right.\right.\right.\\ & & + &\left.\left.\left.
	\frac{1}{2}\partial^{(2)} q(x_{j},t^{n}) \Delta x^2 - 
	\frac{1}{6}\partial^{(3)} q(x_{j},t^{n}) \Delta x^3+\mathcal{O}(\Delta x^4)-q(x_{j},t^{n}) \right.\right.\right.\\& & +&\left.\left.\left.
	2\partial_x q(x_{j},t^{n}) \Delta x -2\partial^{(2)} q(x_{j},t^{n}) \Delta x^2 + 
	\frac{4}{3}\partial^{(3)} q(x_{j},t^{n}) \Delta x^3+\mathcal{O}(\Delta x^4)\right)  \right]
	\right\rbrace \\
	& = & & \frac{\lambda \Delta t}{2\Delta x^2}\left\lbrace
	\partial_x q(x_j,t^n)\left[\alpha(x_{j+\frac{1}{2}},t^{n})-2\alpha(x_{j-\frac{1}{2}},t^{n})+\alpha(x_{j-\frac{3}{2}},t^{n})\right]\right.\\& & & \left.
	+\frac{1}{2} \partial_x^{(2)} q(x_j,t^n)\Delta x
	\left[\alpha(x_{j+\frac{1}{2}},t^{n})+2\alpha(x_{j-\frac{1}{2}},t^{n})-3\alpha(x_{j-\frac{3}{2}},t^{n})\right]\right.\\& & &\left.
	+\frac{1}{6}\partial^{(3)}_x q(x_j,t^n)\Delta x^2
	\left[\alpha(x_{j+\frac{1}{2}},t^{n})-2\alpha(x_{j-\frac{1}{2}},t^{n})+7\alpha(x_{j-\frac{3}{2}},t^{n})\right]
	+\mathcal{O}(\Delta x^4)
	\right\rbrace\\
	& = & & \frac{\lambda \Delta t}{2}\left[   
	\partial_x q(x_j,t^n) \partial_x^{(2)}\alpha(x_j,t^n) +2\partial_x^{(2)} q(x_j,t^n) \partial_x\alpha(x_j,t^n) \right.\\
	& & +& \left. \partial^{(3)}_x q(x_j,t^n) \alpha(x_j,t^n) +\mathcal{O}(\Delta x)  
	\right] \\
	&= & & \frac{\lambda \Delta t}{2} \partial_x\left[\partial_x\left(\alpha(x_j,t^n)\partial_x q(x_j,t^n)\right)\right]+\mathcal{O}(\Delta x\Delta t).
\end{align*}

\begin{itemize}  
	\item Local truncation error contribution of the diffusion term:
\end{itemize}
\begin{align*}
	& & - &\frac{1}{\Delta x^2} \left\lbrace 
	\overline{ \alpha}(x_{j+\frac{1}{2}},t^n)   \left[ \phantom{\frac{b}{b}}\!\!\!
	q(x_{j+1},t^n)-q(x_{j},t^{n}) \right.\right.\\ & & +& \left. \left. \frac{\Delta t}{2}\left( \phantom{\frac{b}{b}}\!\!\! -\lambda \left[ q(x_{j+2},t^{n})-2q(x_{j+1},t^{n})+q(x_{j},t^{n})\right]  \right. \right.\right.\\
	& &
	+&   \left. \left. \left. \frac{1}{\Delta x^2}\alpha(x_{j+\frac{3}{2}},t^n)\left[  q(x_{j+2},t^{n})-q(x_{j+1},t^{n})\right]
	\right.\right.\right. \\
	& &
	- & \left.\left.\left. \frac{1}{\Delta x^2}\alpha(x_{j+\frac{1}{2}},t^n)\left[  2q(x_{j+1},t^{n})-2q(x_{j},t^{n})\right]
	\right.\right.\right. \\ 
	& &+& \left.\left.\left. \frac{1}{\Delta x^2}\alpha(x_{j-\frac{1}{2}},t^n)\left[  q(x_{j},t^{n})-q(x_{j-1},t^{n})\right]
	\right.\right.\right. \\   
	& &+ & \left.\left.\left.  \beta\left[ q(x_{j+1},t^{n})-q(x_{j},t^{n})\right] \phantom{\frac{b}{b}}\!\!\!\right) 
	\right] 
	\right.\\ & & + &\left.
	\overline{\alpha}(x_{j-\frac{1}{2}},t^n)  \left[\phantom{\frac{b}{b}}\!\!\!\
	q(x_{j-1},t^{n})-q(x_{j},t^{n})
	\right. \right.\\ & & +& \left. \left.
	\frac{\Delta t}{2}\left( \phantom{\frac{b}{b}}\!\!\!\! -\lambda \left[ -q(x_{j+1},t^{n})+2q(x_{j},t^{n})-q(x_{j-1},t^{n})\right]  \right. \right.\right.\\ & & -  &
	\left. \left. \left.  
	\frac{1}{\Delta x^2}\alpha(x_{j+\frac{1}{2}},t^n)\left[  q(x_{j+1},t^{n})-q(x_{j},t^{n})\right]
	\right. \right.\right.\\ & & + & \left. \left. \left. 
	\frac{1}{\Delta x^2}\alpha(x_{j-\frac{1}{2}},t^n)\left[  2q(x_{j},t^{n})-2q(x_{j-1},t^{n})\right]
	\right.\right.\right. \\ & & -&\left.\left.\left.
	\frac{1}{\Delta x^2}\alpha(x_{j-\frac{3}{2}},t^n)\left[  q(x_{j-1},t^{n})-q(x_{j-2},t^{n})\right]
	\right.\right.\right. \\ & & +&\left.\left.\left.
	\beta\left[ q(x_{j-1},t^{n})-q(x_{j},t^{n})\right] \phantom{\frac{b}{b}}\!\!\!\right) 
	\right] 
	\right\rbrace\\
	&=&-&\left[  \partial_x \overline{\alpha} (x_j,t^n)\right) \left(\partial_x q(x_j,t^n) \right]  - \frac{\overline{\alpha} (x_{j+1},t^n)+ \overline{\alpha} (x_{j-1},t^n)}{2}\partial^{(2)}_x q(x_{j},t^n)\\
	& &  -& \lambda\frac{\Delta t}{2} \left( \partial_x \overline{\alpha} (x_j,t^n)\right)\left(\partial^{(2)}_x q(x_{j},t^n) \right) + \lambda\frac{\Delta t}{2} \overline{\alpha} (x_j,t^n)\left( \partial^{(3)}_x q(x_{j},t^n) \right)\\
	& & - &\frac{\Delta t}{2} \left[ \phantom{\frac{b}{b}}\!\!\! \left(\partial_x q(x_{j},t^n) \right) \left(\partial_x^{(2)} \alpha (x_j,t^n) \right) \left(\partial_x \overline{\alpha} (x_j,t^n) \right) \right.\\
	& & + & \left. \overline{\alpha} (x_j,t^n) \left(\partial_x q(x_{j},t^n) \right) \left( \partial_x^{(3)} \alpha (x_j,t^n)\right)
	\right.\\& &  +&\left.
	2 \left(\partial_x \overline{\alpha} (x_j,t^n) \right) \left(\partial^{(2)}_x q(x_{j},t^n)\right) \left(\partial_x \alpha (x_j,t^n) \right) 
	\right.\\& &  +&\left.
	3  \overline{\alpha} (x_j,t^n) \left(\partial^{(2)}_x q(x_{j},t^n)\right) \left( \partial_x^{(2)} \alpha (x_j,t^n)\right)
	\right.\\ & & +& \left.
	\left( \partial_x \overline{\alpha} (x_j,t^n)\right) \left(\partial^{(3)}_x q(x_{j},t^n)\right)\alpha (x_j,t^n)
	\right.\\& &  +&\left.
	3 \overline{\alpha} (x_j,t^n) \left(\partial^{(3)}_x q(x_{j},t^n)\right)\left( \partial_x \alpha (x_j,t^n)\right)
	\right.\\ & & +& \left.
	\overline{\alpha} (x_j,t^n) \left(\partial^{(4)}_x q(x_{j},t^n)\right) \alpha (x_j,t^n)  \phantom{\frac{b}{b}}\!\!\! \right] 
	-\beta \frac{\Delta t}{2} \left[\phantom{\frac{b}{b}}\!\!\! \left( \partial_x \overline{\alpha} (x_j,t^n)\right) \left(\partial_x q(x_{j},t^n)\right) 
	\right.\\ & & +&\left.
	\overline{\alpha} (x_j,t^n) \left(\partial^{(2)}_x q(x_{j},t^n)\right) \phantom{\frac{b}{b}}\!\!\!\right]+\mathcal{O}(\Delta x^2 ) +\mathcal{O}(\Delta x \Delta t) 
	\\
	&=& - &\partial_x\left(\alpha(x_j,t^n)\partial_x q(x_j,t^n)\right) + \frac{\Delta t}{2}\left\lbrace \phantom{\frac{b}{b}}\!\!\! \partial_x\left(\partial_t\alpha(x_j,t^n)\partial_x q(x_j,t^n)\right) \right. \\& & -&\left.
	\lambda \partial_x\left(\alpha(x_j,t^n)\partial^{(2)}_x q(x_j,t^n)\right) + \partial_x\left[\alpha(x_j,t^n)\partial^{(2)}_x\left(  \alpha(x_j,t^n) \partial_xq(x_j,t^n)\right) \right]
	\right. \\ & & +&\left.
	\beta \partial_x\left(\alpha(x_j,t^n)\partial_x q(x_j,t^n)\right) \phantom{\frac{b}{b}}\!\!\!
	\right\rbrace 
	+\mathcal{O}(\Delta x^2 )+\mathcal{O}(\Delta x \Delta t ).
\end{align*}

\begin{itemize}    
	\item Local truncation error contribution of the diffusion term to the source term:
\end{itemize}
\begin{align*}
	& & -&  \frac{\beta\Delta t}{2\Delta x^2} \left\lbrace\alpha(x_{j+\frac{1}{2}},t^n)\left[ q(x_{j+1},t^n) -q(x_{j},t^n)\right]\right.  \\ & &- &\left. \alpha(x_{j-\frac{1}{2}},t^n) \left[ q(x_{j},t^n)-q(x_{ j-1},t^n) \right] \right\rbrace \\ 
	&= &- &\frac{\beta\Delta t}{2\Delta x^2} \left[\alpha(x_{j+\frac{1}{2}},t^n)\left(  q(x_{j},t^{n})+\partial_x q(x_{j},t^{n}) \Delta x +\frac{1}{2}\partial^{(2)} q(x_{j},t^{n}) \Delta x^2 
	\right.\right.\\ & &+  & \left.\left.\frac{1}{6}\partial^{(3)} q(x_{j},t^{n}) \Delta x^3+\mathcal{O}(\Delta x^4) -q(x_{j},t^n)\right)
	\right.\\ & & -& \left.
	\alpha(x_{j-\frac{1}{2}},t^n) \left(\phantom{\frac{b}{b}}\!\!\! q(x_{j},t^n)-  q(x_{j},t^{n})-\partial_x q(x_{j},t^{n}) \Delta x  \right.\right.\\ & & +&\left.\left.  
	\frac{1}{2}\partial^{(2)} q(x_{j},t^{n}) \Delta x^2-\frac{1}{6}\partial^{(3)} q(x_{j},t^{n}) \Delta x^3+\mathcal{O}(\Delta x^4) \right) \right] \\
	&=&-&\frac{\beta\Delta t}{2\Delta x}\left[
	\partial_x q(x_j,t^n)\left(\alpha(x_{j+\frac{1}{2}},t^n)-\alpha(x_{j-\frac{1}{2}},t^n)\right) \right.\\ & & +&\left.
	\frac{1}{2}\partial_x^{(2)} q(x_j,t^n)\Delta x \left(\alpha(x_{j+\frac{1}{2}},t^n)+\alpha(x_{j-\frac{1}{2}},t^n)\right) \right.\\ & & + &\left.
	\frac{1}{6}\partial_x^{(3)} q(x_j,t^n)\Delta x^2
	\left(\alpha(x_{j+\frac{1}{2}},t^n)-\alpha(x_{j-\frac{1}{2}},t^n)\right) +\mathcal{O}(\Delta x^3)
	\right]\\
	& = & -&\frac{\beta\Delta t}{2}\partial_x\left(\alpha(x_j,t^n)\partial_x q(x_j,t^n)\right) + \mathcal{O}(\Delta x^2)+\mathcal{O}(\Delta x \Delta t ).
\end{align*}
Gathering together the previous terms and the already obtained for the linear advection-reaction equation {\color{black} (\ref{apendixa1})}, we get

\begin{align*}
	\tau^n& =  & & \partial_t q(x_j,t^n) +\lambda \partial_x q(x_j,t^n) -\partial_x\left[ \alpha(x_j,t^n)\partial_x q(x_j,t^n)\right] - \beta q(x_j,t^n) 
	\\
	& & +&\frac{\Delta t}{2} \left[\phantom{\frac{b}{b}}\!\!\! \partial^{(2)}_t q(x_j,t^n) +\lambda
	\partial_x \left[  -\lambda  \partial_x q(x_j,t^n) + \beta q(x_j,t^n)  \right] \right.\\
	& & -&\left. \beta\left[  -\lambda\partial_x q(x_j,t^n)  +\beta q(x_j,t^n) \right] \phantom{\frac{b}{b}}\!\!\! \right]
	-\frac{\Delta t}{2}  \left\lbrace \phantom{\frac{b}{b}}\!\!\!
	\partial_x\left[\partial_t\alpha(x_j,t^n)\partial_x q(x_j,t^n)\right] \right. \\& &-&\left.
	\lambda \partial_x\left[\alpha(x_j,t^n)\partial^{(2)}_x q(x_j,t^n)\right] + \partial_x\left\lbrace\alpha(x_j,t^n)\partial^{(2)}_x\left[  \alpha(x_j,t^n) \partial_xq(x_j,t^n)\right] \right\rbrace
	\right. \\ & &+&\left.
	\beta \partial_x\left[\alpha(x_j,t^n)\partial_x q(x_j,t^n)\right]
	\phantom{\frac{b}{b}}\!\!\! \right\rbrace 
	-\frac{\beta\Delta t}{2}\partial_x\left[ \alpha(x_j,t^n)\partial_x q(x_j,t^n)\right]     \\
	& & + &\frac{\Delta t}{2}
	\partial^{(2)}_x\left[\alpha(x_j,t^n)\partial_x q(x_j,t^n)\right] + \mathcal{O}\left(\Delta t^2 \right) +\mathcal{O}\left(\Delta x^2 \right) +\mathcal{O}\left(\Delta x \Delta t \right)
	\\	& =  & &\mathcal{O}\left(\Delta t^2 \right) +\mathcal{O}\left(\Delta x^2 \right)+\mathcal{O}(\Delta x \Delta t ).
\end{align*} 
Where we have take into account that, following Cauchy-Kovalevskaya,
\begin{gather*}
	\partial_t^{(2)} q
	- \lambda^2 \partial^{(2)}_x q
	-\partial_{x}\left[ \alpha \partial_{x}^{(2)}\left(  \alpha \partial_{x} q\right)  \right] 
	- \beta^2 q
	+2\lambda \beta \partial_x  q  \notag
	- 2   \beta \partial_{x}\left( \alpha\partial_{x} q \right)
	\\-\partial_{x}\left[\left( \partial_t \alpha\right) \left(\partial_x q \right)  \right] 
	+\lambda\partial_{x}^{(2)}\left(  \alpha \partial_{x} q \right)
	+\lambda\partial_x\left(  \alpha \partial_{x}^{(2)} q\right)  
	=0.
\end{gather*}
Thus, we conclude that the scheme is second order in space and time.

\end{document}